 \newtheorem{theorem}{Theorem}[]
 \newtheorem{lemma}[theorem]{Lemma}
 \newtheorem{corollary}[theorem]{Corollary}
\newtheorem{observation}[theorem]{Observation}
 \newtheorem{conjecture}[theorem]{Conjecture}
 \newtheorem{claim}[theorem]{Claim}
 \newtheorem{definition}[theorem]{Definition}
\newcommand\cref[1]{Corollary~\ref{cor:#1}}
\newcommand\cS{\mathcal{S}}
\begin{document}

\title{Convex Hull Thrackles}

\author{Bal\'azs Keszegh\thanks{Alfréd Rényi Institute of Mathematics and ELTE Eötvös Loránd University, Budapest, Hungary. 
		Research supported by the J\'anos Bolyai Research Scholarship of the Hungarian Academy of Sciences, by the National Research, Development and Innovation Office -- NKFIH under the grant K 132696 and FK 132060, by the \'UNKP-21-5 and \'UNKP-22-5 New National Excellence Program of the Ministry for Innovation and Technology from the source of the National Research, Development and Innovation Fund and by the ERC Advanced Grant ``ERMiD''. This research has been implemented with the support provided by the Ministry of Innovation and Technology of Hungary from the National Research, Development and Innovation Fund, financed under the  ELTE TKP 2021-NKTA-62 funding scheme.}
\and D\'aniel Simon\thanks{Alfréd Rényi Institute of Mathematics. Research supported by the ERC Advanced Grant ``Geoscape''.}}

\maketitle
\begin{abstract}
	A \emph{thrackle} is a graph drawn in the plane so that every pair of its edges meet exactly once, either at a common end vertex or in a proper crossing. Conway's thrackle conjecture states that the number of edges is at most the number of vertices. It is known that this conjecture holds for linear thrackles, i.e., when the edges are drawn as straight line segments.
	
	We consider \emph{convex hull thrackles}, a recent generalization of linear thrackles from segments to convex hulls of subsets of points. We prove that if the points are in convex position then the number of convex hulls is at most the number of vertices, but in general there is a construction with one more convex hull. On the other hand, we prove that the number of convex hulls is always at most twice the number of vertices.
\end{abstract}

A \emph{drawing of a graph} is a representation of the graph in the
plane such that the vertices are represented by distinct points and the edges by simple continuous curves connecting the corresponding point pairs and not passing through any other point representing a vertex. When it leads to no confusion, we identify the vertices and the edges with their representations.
A drawing of a graph is a \emph{thrackle} if every pair of edges meet precisely once, either at a common endvertex or at a proper crossing. Conway’s conjecture \cite{woodall1971thrackles} states that every thrackle of $n$ vertices can have at most $n$ edges.

It is easy to draw thrackles with $n$ edges but from above, through a series of improvements (Lov\'asz, Pach, and Szegedy \cite{Lovasz1997} proved $2n$, Cairns and Nikolayevsky proved $\frac{3}{2} n$ \cite{cairns2000bounds}, improved further by Fulek and Pach \cite{fulek2011computational,fulek2019thrackles} and by Goddyn and Xu \cite{goddyn2017bounds}), the current best upper bound is $1.393n$ due to Xu \cite{xu2021new}. 

Perhaps the most natural special case is the case of \emph{linear thrackles}, i.e., when edges are represented by straight line segments. The conjecture was originally proved in this case by Erd\H os \cite{erdos1946sets}. Another nice proof of this is due to Perles, see \cite{pach2011conway}.

Further natural special cases solved are when the edges are represented by $x$-monotone curves \cite{pach2011conway} (this generalizes linear thrackles) and when the drawing is \emph{outerplanar} \cite{cairns2012outerplanar}, i.e., if the points lie on the boundary of a disk and the edges lie inside this disk. In the latter case, the proof method of Perles again works while in the former it fails \cite{pach2011conway}.

There are plenty of generalizations of thrackles that we do not consider here in detail (when we allow tangencies, when we only require an odd number of intersections between edges, etc.).

The version that interests us is the following. \'Agoston et al. proposed a generalization of linear thrackles \cite{agoston2022orientation}, which was altered by a suggestion of Gosset to get the following variant \cite{pc} (see Section \ref{sec:disc} for more details).

\begin{definition}
Suppose that we are given a set $P$ of $n$ points in general position in the plane, and a family $\cS$ of subsets of $P$ whose convex hulls are all different and form the family $C(\cS)=\{Conv(S):S\in\cS\}$ where $Conv(S)$ denotes the convex hull of $S$. We say that $C(\cS)$ is a convex hull thrackle on $P$ if the following hold:
    \begin{enumerate}
        \item $C_1\not\subset C_2$ for any two different $C_1,C_2\in C(\cS)$;
        \item $C_1\cap C_2\ne\emptyset$ for any two different $C_1,C_2\in C(\cS)$;
        \item $C_1\cap C_2\cap C_3\subset P$ for any three different $C_1,C_2,C_3\in C(\cS)$.
    \end{enumerate}        
\end{definition}

Note that by the definition $C_1\cap C_2\cap C_3$ is either empty or a single point of $P$.

Observe that if in a convex hull thrackle all convex hulls are segments then it is a linear thrackle and, in the other direction, a linear thrackle is a convex hull thrackle in which all convex hulls are segments.

\begin{conjecture}\cite{agoston2022orientation}\label{conj}
 A convex hull thrackle on $n$ points has at most $n$ convex hulls.
\end{conjecture}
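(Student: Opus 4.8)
\noindent The plan is to adapt the charging argument that proves Conway's conjecture for linear thrackles (Perles' proof, see \cite{pach2011conway}) and for outerplanar thrackles \cite{cairns2012outerplanar}. Concretely, I would try to build an injection $\varphi\colon C(\cS)\to P$ that assigns to each convex hull $C$ one of its own vertices; this immediately gives $|C(\cS)|\le n$. By Hall's theorem it is enough to verify that for every subfamily $\mathcal{F}\subseteq C(\cS)$ the union $\bigcup_{C\in\mathcal{F}} V(C)$ of the vertex sets has at least $|\mathcal{F}|$ elements, and the intended way to get this is an extremal (``peeling'') argument: repeatedly locate one hull that can be charged to a vertex the remaining hulls do not need, remove it, and induct on $|C(\cS)|$.

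For the peeling step I would look, for a hull $C$ with vertices $v_1,\dots,v_k$ in convex order, at each vertex $v$ of $C$ together with the \emph{local cone} $\kappa_C(v)$, the angular sector at $v$ bounded by the two sides of $C$ meeting at $v$. The first thing to prove is a ``private vertex'' lemma: every hull $C$ has a vertex $v$ at which $C$ is extremal among the hulls of $C(\cS)$ through $v$, meaning that all other such hulls leave $v$ on one fixed side of $\kappa_C(v)$. Conditions (1)--(3) should force this: if at every vertex of $C$ there were hulls on both sides, then following these neighbouring hulls around $\partial C$ and using that each of them must also meet all the others (condition (2)) while triple intersections collapse to single points of $P$ (condition (3)) ought to produce a forbidden containment (contradicting (1)) or an illegal triple intersection. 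One then charges $C$ to such an extremal vertex; to keep $\varphi$ injective, ties between a ``leftmost'' and a ``rightmost'' extremal vertex are broken by a global rule (for instance, orient $\partial C$ and always take the extremal vertex encountered first), and one checks that two hulls sharing $v$ cannot both be charged to $v$, since that again forces one to contain the other near $v$.

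The step I expect to be the real obstacle is the general, non-convex position case. When $P$ is in convex position every hull inherits the cyclic order of $P$, so a hull is faithfully described by its cyclic list of ``gaps'' (the arcs of $\mathrm{Conv}(P)$ between consecutive vertices), two hulls meet exactly when their vertex sets are cyclically linked, and the whole peeling bookkeeping reduces to a statement about systems of arcs; I expect this case to work and to give the bound $|C(\cS)|\le n$. Without convex position a hull can swallow vertices of other hulls in its interior, the ``gap'' encoding disappears, and the local-cone lemma no longer controls which hulls pass through a given vertex, so here I would not be surprised if the clean bound $n$ simply fails, as the $n+1$ construction announced in the abstract confirms. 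The realistic fallback is then to prove the conjecture for convex position and to settle in general for a weaker linear bound such as $|C(\cS)|\le 2n$, obtained by charging each hull to two of its vertices via a two-sided relaxation of the private-vertex lemma.
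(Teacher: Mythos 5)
The statement you are asked to prove is false, and the paper's main point about Conjecture~\ref{conj} is precisely to refute it: Theorem~\ref{thm:counter} constructs, for every $n\ge 6$, a convex hull thrackle on $n$ points with $n+1$ convex hulls. Consequently no injection $\varphi\colon C(\cS)\to P$ with $\varphi(C)\in V(C)$ can exist in general, Hall's condition must fail for some subfamily, and your ``private vertex'' lemma cannot be true as stated for points in general position. You half-acknowledge this at the end, but the body of your argument never actually establishes the lemma anywhere --- it is asserted with ``should force'' and ``ought to produce,'' which is exactly where the work lies. In the counterexample the two large hulls through $p$ and $p'$ have, at each of their vertices, other hulls on both sides of the local cone, so the peeling step has no valid starting hull; any correct write-up has to either restrict to convex position or abandon the bound $n$.

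For the cases that are true, your sketch does not match what a proof requires. In convex position the paper does not build an injection; it takes a minimal counterexample and, through Lemmas~\ref{lem:boundary}--\ref{lem:onlyp} and Claim~\ref{claim:1point}, shows that no pair of points lies in two hulls, that sets of size $2$ and sets of size $\ge 3$ use disjoint point sets $P_2$ and $P_3$, and that each point of $P_3$ lies in at most two hulls; the count $\tfrac{2}{3}|P_3|+|P_2|\le n$ (the $|P_2|$ part being Perles' argument, Corollary~\ref{cor:linear}) finishes it. Note that the extremal structure is not ``one private vertex per hull'': a triangle can be charged at rate $2/3$ per vertex, so a vertex-injection is the wrong invariant even where the bound holds. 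For the general $2n$ bound your ``charge each hull to two of its vertices'' is not how the paper proceeds either; Theorem~\ref{thm:2n} replaces each hull by its (at least three) boundary segments, shows via the leftie/wedge analysis (Lemma~\ref{lem:lefties}, Observation~\ref{obs:no2nonleftie}, Lemma~\ref{lem:6}) that each vertex carries non-leftie weight at most $6$, and divides by $3$. As it stands your proposal is a research plan with its central lemma unproved and, in the generality claimed, unprovable.
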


We disprove this conjecture by presenting an infinite class of examples with $n$ points and $m=n+1$ convex hulls:

\begin{theorem}\label{thm:counter}
    For every $n\ge 6$, there exists a convex hull thrackle on $n$ points with $n+1$ convex hulls.
\end{theorem}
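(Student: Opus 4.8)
The argument splits into two parts: build one explicit convex hull thrackle on $6$ points with $7$ convex hulls, and then show that any convex hull thrackle with $m=n+1$ on $n$ points can be enlarged, by adding one point and one convex hull, to one with $m=n+2$ on $n+1$ points. Iterating the second part from the base case yields the statement for every $n\ge 6$. (Alternatively one could try to write down a single family parametrized by $n$, but the inductive route handles ``every $n\ge 6$'' most cleanly.)

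\textbf{The base configuration.} By the convex-position result announced above, the $6$ points cannot be in convex position, so I would take, say, three ``outer'' and three ``inner'' points in general position and let $C(\cS)$ consist of a few triangles together with a few segments; at least one genuine triangle is unavoidable, since a family consisting only of segments is a linear thrackle and hence has size at most $n$. The placement is dictated by the three axioms. Axiom (1) forbids a segment-hull from being an edge of a triangle-hull and forbids one triangle-hull from containing another; axiom (2) is the easy one, and I would secure it by letting the hulls share endpoints and cluster around a common region. Axiom (3) is the delicate condition and drives the whole design. For a triangle-hull $T$ and two segment-hulls $s_1,s_2$ we have $T\cap s_1\cap s_2=\{s_1\cap s_2\}$ whenever the (unique, non-vertex) crossing $s_1\cap s_2$ lies in $T$, so I must place all segment-hulls so that their pairwise crossings fall in a region disjoint from every triangle-hull. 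For two triangle-hulls $T_1,T_2$ and a segment-hull $s$, the set $T_1\cap T_2$ is a convex region and $T_1\cap T_2\cap s$ must collapse to a point of $P$, so $s$ must avoid the interior of $T_1\cap T_2$; and any three triangle-hulls may overlap in at most one point, which must be one of the six given points. Once a candidate point set and hull family of the right combinatorial type is found, I would pin it down with explicit coordinates and verify axioms (1)--(3) by the finite check over the $\binom{7}{2}$ pairs and $\binom{7}{3}$ triples.

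\textbf{The extension.} For the inductive step I would carry along, as part of the data of a constructed example, a distinguished vertex $v$ of $P$ and a small open angular sector $W$ at $v$ that is disjoint from every hull except along one of its bounding rays; such a ``free corner'' can be reserved in the base configuration and preserved by the construction. To pass from $n$ to $n+1$, introduce a new point $x$ inside $W$, very close to $v$, and split the hull $C_0$ that bounds $W$ along that ray (a triangle with $v$ as a vertex) into two triangles using $C_0$ and $x$. This adds exactly one point and one hull, so the surplus $m-n=1$ is preserved, and it is set up so that a new free corner is again available. The axioms for the two new triangles against each other and against the old hulls must be rechecked: since $x$ lies arbitrarily close to $v$, every hull that met $C_0$ still meets both new triangles; for $x$ close enough no new containment is created, as the two new triangles are incomparable and neither contains nor is contained in any old hull; and every triple intersection involving a new triangle is either equal to the corresponding old triple intersection or, by the choice of $W$, forced to be a single point of $P$.

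\textbf{Main obstacle.} The crux throughout is axiom (3). In the base configuration it controls exactly where segment-crossings may lie relative to the triangles and forces every triple overlap of triangles to collapse onto one of the given points; producing a configuration of $7$ hulls on $6$ points that satisfies all of these constraints simultaneously is precisely the content of Theorem~\ref{thm:counter}, and I expect identifying the correct combinatorial type of point set and hull family (and hence the right coordinates) to be the main difficulty. In the extension step, the corresponding difficulty is to rule out that splitting $C_0$ creates a new one- or two-dimensional triple intersection with an old hull, which is exactly what the reserved free corner $W$ is engineered to prevent.
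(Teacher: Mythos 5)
Your proposal is a plan rather than a proof: the entire content of Theorem~\ref{thm:counter} is the exhibition of an explicit configuration, and you never exhibit one. You correctly identify the constraints that axioms (1)--(3) impose on a hypothetical example, but you stop at ``once a candidate point set and hull family of the right combinatorial type is found, I would pin it down with explicit coordinates'' and even concede that finding it ``is precisely the content of Theorem~\ref{thm:counter}.'' That is the gap. The inductive extension step has the same problem: without a base configuration there is nothing to reserve a ``free corner'' in, and the claim that such a corner ``can be reserved in the base configuration and preserved by the construction'' is asserted, not shown. As written, no part of the argument can be checked.

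For comparison, the paper gives a single explicit family for all $n\ge 6$ rather than a base case plus induction. Take a point $q'$ inside a triangle $p\,p'\,q$, and points $r_1,\dots,r_{n-4}$ placed so that $r_1,\dots,r_{n-4},p',p$ are in convex position and appear in this radial order as seen from $q'$. The $n+1$ hulls are: the $n-1$ triangles at $q'$ spanned by consecutive pairs in this radial order (these pairwise meet exactly at $q'$, or share a side, so axioms (2) and (3) among them are immediate), together with two larger hulls each containing $p$ and $p'$, one additionally containing the even-indexed $r_i$ and the other the odd-indexed $r_j$. The even/odd alternation is the key device your sketch is missing: it makes the two large hulls incomparable (axiom (1)), forces each of them to cross every small triangle at $q'$ (axiom (2)), and keeps the triple intersections confined to points of $P$ (axiom (3)). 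Note also that this construction is ``all triangles and larger hulls, no segment hulls,'' whereas your sketch commits to a mix of triangles and segments and then worries about where segment--segment crossings may fall --- a complication the paper's design avoids entirely.
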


In \cite{agoston2022orientation} they note that an interesting special case is when $P$ is in convex position. Indeed, this case is a generalization of outerplanar linear thrackles (for thrackles this specific case was not so important as any one of the two conditions in itself already allows the proof method of Perles to be applied). For this case, we can prove that their conjecture holds:

\begin{theorem}\label{thm:convex}
	If $P$ is a set of $n$ points in convex position then any convex hull thrackle on $P$ has at most $n$ convex hulls.
\end{theorem}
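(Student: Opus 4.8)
The plan is to reduce Theorem~\ref{thm:convex} to the case in which every convex hull is a segment, which is precisely a linear thrackle whose point set happens to be in convex position, and there the bound $m\le n$ is Erd\H os' theorem \cite{erdos1946sets}. First normalize: since $P$ is in convex position we may assume each $S\in\cS$ equals the vertex set of $Conv(S)$ on the convex polygon $Conv(P)$, and we identify a hull $C_k$ with the subset $S_k\subseteq P$ read in cyclic order. The three conditions then become purely combinatorial. Condition~1 says $\{S_k\}$ is an antichain: indeed any chord between two vertices of $Conv(S_j)$ lies inside it, and a point of $P$ lies in $Conv(S_j)$ only if it is a vertex of $Conv(S_j)$, so $Conv(S_i)\subseteq Conv(S_j)\iff S_i\subseteq S_j$. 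Condition~2 says every two sets $S_i,S_j$ \emph{interleave} on the circle, i.e.\ $S_i$ is not contained in a single arc between two cyclically consecutive points of $S_j$. Condition~3 says no three hulls share a two‑dimensional region, with any triple intersection a single point of $P$. (One also checks that for $m\ge 2$ no $S_k$ is a singleton.)

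To carry out the reduction I would repeatedly pick a hull $C_k$ with $|S_k|\ge 3$ and shrink it by deleting one carefully chosen vertex $v\in S_k$, i.e.\ replace $C_k$ by $Conv(S_k\setminus v)$; iterating turns every hull into a segment without changing the number of hulls, and then Erd\H os' theorem finishes. Since shrinking a set shrinks every triple intersection, condition~3 is preserved automatically, so the only things to maintain are: (a) condition~1, namely $S_k\setminus v$ must not become contained in another $S_j$ nor coincide with one; and (b) condition~2, namely $Conv(S_k\setminus v)$ must still meet every other $C_j$. The threat to (b) is that $C_j$ met $C_k$ only inside the ``ear'' triangle $Conv(v,v^-,v^+)$ cut off at $v$ (with $v^\pm$ the neighbours of $v$ in $S_k$), which disappears when $v$ is removed. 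The key geometric observation for (b) is that if the ear at $v$ is all of $C_j\cap C_k$, then $C_j$ is ``pinned'' at the extreme vertex $v$ of $C_k$, and two hulls pinned at two different vertices of $C_k$ lie, together with $C_k$, in a configuration where $C_k$ separates their pinning regions, which should force a violation of pairwise intersection or of condition~3; hence only boundedly many vertices of $C_k$ can be ``needed'', and among the remaining ones at most a few are blocked by~(a), leaving a safe $v$ (one also uses here that $|S_k|$ can be taken minimal or that one may instead delete a whole subpath, giving extra room).

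The step I expect to be the main obstacle is exactly~(b): making the ``pinning'' argument watertight, i.e.\ ruling out that every vertex of a minimal polygon hull $C_k$ is simultaneously needed for some pairwise intersection or protected by the antichain. I anticipate this needs a careful case analysis of how three convex polygons inscribed in a common circle can pairwise intersect while the interior of one of them avoids the common intersection, and this is where condition~3 does real work, since it forbids a genuinely two‑dimensional triple overlap — the phenomenon that would let many hulls ``pile up'' near the vertices of $C_k$. A fallback, should the shrinking argument stall, is to induct on $n+m$: delete a point that is a vertex of exactly one hull together with that hull, passing from $(n,m)$ to $(n-1,m-1)$; in the remaining case, where every used point lies on at least two hulls, use the interleaving structure together with the impossibility of an inscribed even cycle — a straight chord cannot cross every edge of an odd inscribed cycle, since that would force an odd number of its vertices to alternate between the two arcs cut off by the chord, while following the cycle — to show the hull family is rigid enough that $m\le n$ holds outright (this is essentially Perles' argument for the outerplanar case \cite{pach2011conway}, now applied to the segments obtained after shrinking).
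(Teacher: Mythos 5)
There is a genuine gap, and it is exactly at the step you flag as the main obstacle. Your plan is to shrink every hull with $|S_k|\ge 3$ down to a segment, one vertex at a time, without changing the number of hulls, and then invoke Erd\H os' theorem. But a ``safe'' vertex $v$ need not exist: the paper exhibits a convex hull thrackle on points in convex position in which no triangle can be replaced by any segment inside it (Figure~\ref{fig:constr-nocont}; see also the remark after Claim~\ref{claim:convexextr} that non-extremal convex thrackles need not have an underlying linear thrackle). So the unconditional ``pinning'' argument you hope for — that at most boundedly many vertices of $C_k$ are needed and one can always be deleted — is false, and no amount of case analysis of inscribed polygons will rescue it. The shrinking steps can only be justified \emph{conditionally}: one takes a minimal counterexample (minimal $n$, then minimal $\sum_{S\in\cS}|S|$) with $m=n+1$, and uses the failure of a shrink to extract structure, rather than asserting that shrinking always succeeds.

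What actually closes the argument in the paper is not a full reduction to segments but a hybrid count. Minimality yields: no two points of $P$ lie in two hulls (Corollary~\ref{convex-no2}); if two hulls meet only in a point $p$ then both are segments (Lemma~\ref{lem:onlyp}); hence a point lying in three or more hulls forces two of them to be segments meeting only at that point, and a segment hull and a $\ge 3$-point hull can never share a point. This splits $P$ into $P_2$ (points used only by segment hulls) and $P_3$ (points used only by $\ge 3$-point hulls, each such point in at most two hulls), giving at most $|P_2|$ segments by the Perles/Erd\H os argument and at most $\tfrac{2}{3}|P_3|$ larger hulls by double counting — in total at most $n$, contradicting $m=n+1$. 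Your fallback (induction on $n+m$ by deleting a point in exactly one hull) is also not developed: there is no argument that such a point exists, nor that the remaining configuration is ``rigid'' in the sense you describe. The correct repair is to replace the global reduction by this minimal-counterexample bookkeeping in which non-segment hulls are counted separately and shown to be cheap.
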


From the proof it also follows that the extremal examples in Theorem \ref{thm:convex} are extensions of extremal examples of linear thrackles, see Claim \ref{claim:convexextr}. Some examples of linear thrackles with $n$ points are the following. Take an odd number of points evenly spaced on a circle where each point is connected by a segment to the two points that are almost opposite to it. Or take points in convex position and connect one point to every other point by a segment and also connect by a segment its two neighbors on the convex hull of the point set.

Even though there is a counterexample for the above conjecture, convex hull thrackles are a pretty natural extension of linear thrackles and worth investigating further. In particular, as far as our construction is concerned, the conjecture may be already true if we replace the upper bound $n$ with $n+1$.
Our third result is that replacing $n$ with $2n$ is enough:

\begin{theorem}\label{thm:2n}
A convex hull thrackle on $n$ points can have at most $2n$ convex hulls.
\end{theorem}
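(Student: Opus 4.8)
The plan is to separate the family $C(\cS)$ into the one-dimensional hulls (segments), call this subfamily $\cS_1$, and the two-dimensional hulls $\cS_2$, and to bound each part by $n$. For the segments: since $P$ is in general position no three points are collinear, so no segment of $C(\cS)$ is a subsegment of another and two distinct segments cannot overlap in a nondegenerate piece; combined with condition $2$ this forces any two segments of $\cS_1$ to meet in exactly one point, and that point cannot be the endpoint of one segment lying in the relative interior of another (that would be a third collinear point), so it is a common endpoint or a transversal crossing. Hence $\cS_1$ is a linear thrackle on a subset of $P$, and Erd\H os' theorem \cite{erdos1946sets} gives $|\cS_1|\le n$.

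The real work is to show $|\cS_2|\le n$ for the two-dimensional hulls $C_1,\dots,C_m$. I would analyse the planar arrangement $\mathcal A$ formed by the union $\Gamma=\bigcup_i\partial C_i$ of the polygon boundaries. Condition $2$ makes $\Gamma$ connected, since two non-nested intersecting convex regions have crossing boundaries. Condition $3$ is the main lever: no point outside $P$ lies in three hulls, so every vertex of $\mathcal A$ off $P$ is a transversal crossing of exactly two boundaries, every cell of $\mathcal A$ has depth at most $2$, and the depth-$2$ cells are exactly the interiors of the two-dimensional pairwise intersections $C_i\cap C_j$ (each is a single cell, because no third boundary can enter it). Condition $1$ guarantees that for every pair the two boundaries genuinely cross. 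I would then write Euler's formula $V-E+F=2$ for $\mathcal A$, expressing $V$, $E$, $F$ through $D=\sum_i|V(C_i)|$ (note $D\ge 3m$), the number $T$ of transversal crossings, and the face count split by depth (so that the depth-$2$ faces are counted as pairwise intersections), and combine this with a per-boundary count of how $\partial C_i$ is subdivided into arcs and how the depth of the two adjacent cells changes along consecutive arcs. The goal is to push these relations to $m\le n$; since $2n$ has a lot of slack (the truth is conjecturally $\le n+1$), a lossy version of this accounting should already suffice for Theorem~\ref{thm:2n}.

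The step I expect to be the main obstacle is exactly this extraction: one has to use depth $\le 2$ together with non-nestedness to bound $T$ (and the number of two-dimensional pairwise intersections) in terms of $m$ tightly enough, and it is here that condition $1$ and the fact that each pair's intersection meets the boundary in a controlled way must be exploited. A secondary obstacle, which permeates the definition, is bookkeeping for the degenerate configurations it allows: two hulls may share a whole edge (their boundaries then overlap along a segment rather than cross, as happens repeatedly in the pentagon example below), and two boundaries may cross at a common corner lying in $P$; both must be absorbed into the arrangement count. I should also record why the easy shortcuts fail: one might hope that some point of $P$ is a corner of at most two of the hulls and then delete them and induct, but the five triangles spanned by the triples of consecutive vertices of a convex pentagon already form a convex hull thrackle in which every point is a corner of exactly three hulls (every triple of those triangles contains a non-adjacent pair, whose intersection is a single vertex, so condition $3$ holds), so any inductive deletion must remove a larger, globally chosen subfamily — which is essentially why a direct topological count seems the more promising route.
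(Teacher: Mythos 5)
Your proposal has a genuine gap, and in fact its central sub-goal is false. You plan to bound the two-dimensional hulls by $|\cS_2|\le n$ via an Euler-formula analysis of the arrangement of boundaries, but the paper's own counterexample construction (Theorem \ref{thm:counter}) is a convex hull thrackle on $n$ points consisting entirely of two-dimensional hulls ($n-1$ triangles plus two larger polygons) with $n+1$ members; so the family of two-dimensional hulls alone can exceed $n$, and no correct accounting can deliver $|\cS_2|\le n$. Independently of this, the key extraction step --- turning Euler's formula plus the depth-$\le 2$ condition into a bound on $m$ --- is exactly the step you flag as ``the main obstacle'' and leave unproved, so even with a corrected target (say $|\cS_2|\le 2n$) the argument is a plan rather than a proof. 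Your first half is fine: the segments of $C(\cS)$ do form a linear thrackle (general position rules out overlaps and endpoint-in-interior incidences), so $|\cS_1|\le n$; but that is the easy part and is also subsumed by the paper's argument.

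For comparison, the paper does not split the family or use Euler's formula at all. It replaces each hull by its boundary edges (a segment hull counting with multiplicity $3$), obtaining a weighted ``boundary diagram'' in which each hull contributes weight at least $3$, and then runs a Perles-type angular argument: a segment $pa$ is a \emph{leftie from $p$} if some hull lies entirely to the left of $\vec{pa}$; no segment is a leftie from both ends (the two witnesses would be disjoint), and a local case analysis of the wedges at $p$ shows the non-leftie weight at each vertex is at most $6$, giving total weight at most $6n$ and hence at most $2n$ hulls. If you want to salvage your route, you would need to find what plays the role of this per-vertex charge in the arrangement picture; the depth and nestedness conditions alone do not obviously bound the number of faces or crossings linearly in $n$ (the number of transversal crossings can be quadratic in $m$), which is why the local angular counting, rather than a global topological count, is what makes the paper's proof go through.
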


Another direction would be to find a proper adjustment of the definition that excludes our counterexample and for which the original conjecture may be true. We discuss some variants of the definition in Section \ref{sec:disc}.

\bigskip
We finish the Introduction by mentioning a related definition of Asada et al. \cite{asada2016reay}. They also extend linear thrackles to convex hulls. On a given points set $P$ a set of convex hulls $C(\cS)$ is a \emph{thrackle of convex sets} if there exists a finite set $W\supseteq P$ such that for each pair of convex hulls $C_i\ne C_j$ we have $|C_i\cap C_j\cap W|=1$. They conjecture that in this case we still have at most $n$ convex hulls. Note that given a linear thrackle (of segments) we can add to $P$ the set of intersection points of segments to get the required $W$. Thus this definition is also a natural extension of linear thrackles. They prove their conjecture in special cases only. In particular they show that the number of convex hulls is at most $|W|$. We do not see any direct connection between this definition and the definition of convex thrackles we worked with.

\section{Points in convex position}

We first prove a simple special case, note that for this the points are not necessarily in convex position.

\begin{claim}\label{claim:1point}
	A convex hull thrackle on $n$ points in which every set $S\in \cS$ contains a fixed point $p\in P$ can have at most $n-1$ convex hulls.
\end{claim}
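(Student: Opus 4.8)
The plan is to exploit that, since every $S\in\cS$ contains $p$, every convex hull $C\in C(\cS)$ contains $p$; hence condition~2 is automatic, and, because $p\in P$, condition~3 strengthens to: for \emph{any} three distinct hulls $C_i,C_j,C_k$ one has $C_i\cap C_j\cap C_k=\{p\}$ exactly. The goal is to charge each hull (almost injectively) to a point of $P\setminus\{p\}$, which yields the bound $n-1$. A couple of trivialities are cleared first: if $m\le 2$ the bound is immediate (any two-dimensional hull needs $n\ge 3$ points), and once $m\ge 2$ no hull can equal $\{p\}$, since it would be a proper subset of any other hull, contradicting condition~1.

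The charging object is the behaviour of $C_i$ at $p$. By general position $p$ lies on no edge of any $C_i$, so it is either a vertex of $C_i$ or interior to it. When $p$ is a vertex, $C_i$ has one or two edges at $p$, each of the form $pq$ with $q\in P\setminus\{p\}$, and $q$ is the unique point of $P$ on the ray from $p$ through $q$; call these the neighbours of $p$ on $C_i$ (a segment hull has one, a two-dimensional one exactly two, lying in distinct directions). The key lemma I would prove is that no direction/ray is used by three hulls (that common edge would lie in a triple intersection, contradicting $C_i\cap C_j\cap C_k=\{p\}$), and that the single neighbour of a segment hull $pq$ is used by that hull only (a second hull with edge $pq$ would contain $pq$, hence properly if two-dimensional, violating condition~1, or would equal it). In the case where $p$ is a vertex of every hull, a short incidence count between the hulls and the set $R$ of used neighbour points then finishes the job: writing $R=R_1\sqcup R_2$ by whether a point receives one or two hulls, segment hulls live on $R_1$, a point of $R_2$ receives two two-dimensional hulls, so $2|R_2|\le 2m_2$; and counting (hull, used neighbour) incidences two ways gives $|R_1|+2|R_2| = m_1+2m_2 = m+m_2$, whence $m+m_2\le |R|+|R_2|\le (n-1)+m_2$, i.e.\ $m\le n-1$.

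It remains to treat the case where $p$ is interior to some hull $C_1$; assume $m\ge 3$. Here I would first observe that $C_1$ is essentially alone in this: no other hull has $p$ interior (a triple intersection of $C_1$, that hull, and a third hull would contain a $1$-dimensional set near $p$), and for any two other hulls $C_j,C_k$ the cones they span at $p$ meet only at $p$ — because $C_1\cap C_j\cap C_k=\{p\}$ while a neighbourhood of $p$ lies in $C_1$. Thus the angular intervals at $p$ of the remaining $m-1$ hulls are pairwise disjoint and their neighbour points are all distinct, giving $m-1$ distinct points of $P\setminus\{p\}$. To gain the last point: if some remaining hull is two-dimensional it supplies two distinct neighbours and we are done; otherwise all $m-1$ remaining hulls are segments $pq$, and a vertex $v$ of $C_1$ cannot be any of those $q$'s, since then $pq\subseteq C_1$ would be a proper containment (as $C_1$ is two-dimensional), contradicting condition~1 — so $v$ is an $m$-th distinct point and $m\le n-1$.

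I expect the main obstacle to be squeezing out $n-1$ rather than $n$: the naive version of the charge gives one hull per used ray and one ray per point, which only yields $m\le n$, and the improvement is precisely where condition~1 (non-containment) must be used — both to forbid a hull from properly containing a segment through $p$ in the incidence count, and to rule out the ``full star plus one big hull'' configuration in the interior case. A secondary nuisance is the careful bookkeeping of the degenerate hulls (segments, and the excluded hull $\{p\}$) so that the two-way incidence count is clean.
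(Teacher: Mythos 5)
Your proof is correct and rests on the same double count as the paper's: each point $q\ne p$ can serve at most two hulls via the segment $pq$ (by condition~3, since three hulls containing $pq$ would have a non-point triple intersection), while the endpoint of a segment hull serves only that hull (by condition~1). The paper streamlines this by charging each non-segment hull to two arbitrary points of $S\setminus\{p\}$ (whose segments to $p$ lie in $Conv(S)$) rather than to the boundary edges of the hull at $p$, which makes your vertex-versus-interior case split for $p$ — your entire third paragraph — unnecessary.
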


\begin{proof}[Proof of Claim \ref{claim:1point}.]
	Let $p\in P$ be the point such that $\{p\}\in S$ for every $S\in \cS$. If for some $q\ne p$ we have that $\{p,q\}\in \cS$ then $q$ cannot be part of any other convex hull as there are no convex hulls containing each other. Thus if there are $k$ such points then there are at most $k$ such segments in $C(\cS)$ and there is no other convex hull which is a segment. Convex hulls that are not segments contain at least two segments containing $p$, on the other hand every such segment is in at most two convex hulls. Therefore the number of non-segment convex hulls is at most the number of points not counted earlier, that is, at most $n-1-k$. Altogether we have at most $n-1$ convex hulls, as claimed.	
\end{proof}

In the rest of this section, we always assume that $P$ is in convex position.

\begin{proof}[Proof of Theorem \ref{thm:convex}]
From now on let $\cS$ such that $C(\cS)$ is a convex hull thrackle on $P$. We take a minimal counterexample, i.e., where $|P|=n$ is as small as possible, $|\cS|=m>n$ and such that $\sum_{S\in\cS}|S|$ is as small as possible. This implies that $m=n+1$. Whenever we say that a convex hull thrackle is smaller than another one we refer to this ordering. If there is a set $S\in \cS$ with $|S|=1$ then $|\cS|=1$, a contradiction, therefore every set in $\cS$ has size at least two.
The proof is a consequence of a series of lemmas.
%

\begin{lemma}\label{lem:boundary}
	There cannot be a boundary edge of $Conv(P)$ which is a proper subset of a convex hull from $C(\cS)$.
\end{lemma}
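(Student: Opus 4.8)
The plan is to argue by contradiction from the minimality of $C(\cS)$. Suppose $ab$ is an edge of $Conv(P)$, so that $a$ and $b$ are consecutive vertices of the convex polygon, and suppose $ab\subsetneq C_0$ for some $C_0=Conv(S_0)\in C(\cS)$; then $a,b\in S_0$ and $|S_0|\ge 3$. I would exhibit a strictly smaller convex hull thrackle with the same $n$ and the same $m=n+1$, contradicting minimality. The natural candidate is the family $\cS'$ obtained from $\cS$ by replacing $S_0$ with $\{a,b\}$: this strictly decreases $\sum_{S\in\cS}|S|$, and because $a,b$ are consecutive on $Conv(P)$ no third point of $P$ lies on the segment $ab$, so the new hull (the segment $ab$) is distinct from every hull of $C(\cS)$ and contains none of them. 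It then remains to check that $C(\cS')$ satisfies the three conditions.

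Condition (3) and one half of condition (1) are immediate. For (3): any triple intersection of $C(\cS')$ that involves the segment $ab$ is contained in the corresponding triple intersection of $C(\cS)$ with $C_0\supseteq ab$ in place of $ab$, hence still lies in $P$; the other triples are unchanged. For (1) only one thing remains, namely to rule out that $ab$ is a \emph{proper} subset of some other hull, i.e.\ that some $S_i$ with $i\ne 0$ contains both $a$ and $b$. So the two real obligations are: (X) no set of $\cS$ other than $S_0$ contains $\{a,b\}$; and (Y) every hull $C_i$, $i\ne 0$, already meets the segment $ab$ (which is what condition (2) demands once $C_0$ has been shrunk down to $ab$).

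I expect (Y) to be the crux. The point is that $C_0$ is pinned to the boundary of $Conv(P)$ along $ab$: writing $\ell$ for the line through $a,b$, we have $Conv(P)\cap\ell=ab$, so any $C_i$ avoiding the segment $ab$ lies strictly on the $P\setminus\{a,b\}$ side of $\ell$ and, since $C_i\not\subseteq C_0$ by (1), its boundary must cross $C_0$ through a non-$ab$ edge, meeting $C_0$ only in the part that bulges above $\ell$. One must show this is impossible in a minimal counterexample --- either by extracting a contradiction from condition (3) and the way several hulls avoiding $ab$ are forced to overlap $C_0$ and each other near $ab$, or, if the segment replacement turns out too lossy, by using instead the milder surgery of deleting a single endpoint ($a$ or $b$) from $S_0$, which only trims a corner of $C_0$ at that endpoint and so cannot sever its intersection with a hull that was already disjoint from $ab$. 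For (X): if some other $S_i$ also contained $\{a,b\}$ then $ab\subseteq C_0\cap C_i$, and by (Y) together with (3) every remaining hull would have to meet $ab$ in exactly one of the points $a,b$; thus every hull of $C(\cS)$ has $a$ or $b$ as a vertex, and splitting the hulls into those through $a$ and those through $b$ --- with $C_0$ and $C_i$ lying in both classes --- and applying Claim~\ref{claim:1point} to each class should yield a contradiction with $m=n+1$. A convenient preliminary reduction, available throughout, is that every vertex of $C_0$ must be used by some other set of $\cS$, since otherwise deleting $C_0$ together with an unused vertex already gives a strictly smaller counterexample on $n-1$ points.
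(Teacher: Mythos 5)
Your proposal does not close the argument. The primary surgery (replace $S_0$ by $\{a,b\}$) stands or falls with your condition (Y), which you correctly identify as the crux but never establish --- and (Y) is in fact false in general: a hull $C_i$ with $S_i\cap\{a,b\}=\emptyset$ can perfectly well meet $C_0$ only in the part of $C_0$ that ``bulges above'' the line $ab$ (say, crossing $C_0$ near its third vertex), so shrinking $C_0$ to the segment $ab$ destroys condition (2). Your fallback --- deleting a single endpoint of $S_0$ --- is the route the paper actually takes, and your observation that this trimming cannot sever the intersection with a hull \emph{disjoint} from $ab$ is correct; but that is the easy half. The dangerous hulls are exactly those containing one of $a,b$ whose intersection with $C_0$ is confined to the corner at that vertex (in the extreme case $C_i\cap C_0=\{a\}$): deleting $a$ \emph{does} sever such an intersection, and your write-up never addresses them, so neither surgery is shown to produce a valid smaller thrackle.

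The missing idea is to play the two deletions against each other. If removing $a$ fails, the obstructing hull $S_1$ must satisfy $S_1\cap S_0=\{a\}$ and $Conv(S_1)\setminus\{a\}$ must lie in the component of $Conv(P)\setminus C_0$ adjacent to the corner of $C_0$ at $a$; symmetrically for $b$ and some $S_2$. Because $a,b$ are consecutive on $Conv(P)$ and $|S_0|\ge 3$, these are \emph{different} components, so $Conv(S_1)\cap Conv(S_2)=\emptyset$, contradicting condition (2) for the original family. Hence at least one of the two deletions succeeds, giving the smaller counterexample. Your closing reduction (every vertex of $C_0$ is used by another set) is valid but not needed; your treatment of (X) rests on (Y) and so inherits the same gap.
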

\begin{proof}
    Assume on the contrary and let $Conv(p_1,p_2)$ be such a boundary edge ($p_1,p_2\in P$) and let $S\in \cS$ contain both of $p_1$ and $p_2$ and a further vertex $p_3$. We claim that we can remove one of $p_1,p_2$ from $S$ to get a smaller counterexample, which is a contradiction. 
    Indeed, assume on the contrary that if we remove $p_1$ from $S$ then the resulting family of convex hulls is not a convex hull thrackle. Only the first property can fail, that is, there is a set $S_1\in \cS$ which is disjoint from $Conv(S\setminus\{p_1\})$. Removing $p_2$ shows that there is also a set $S_2\in \cS$ disjoint from $Conv(S\setminus\{p_2\})$. Using that $P$ is in convex position and $p_1$ and $p_2$ are consecutive on the hull, we get that $S_1\cap S=\{p_1\}$, $S_2\cap S=\{p_2\}$ and that $Conv(S_1)\setminus\{p_1\}$ and $Conv(S_2)\setminus\{p_2\}$ are in different connected components of $Conv(P)\setminus Conv(S)$ (here we used that $p_3$ exists as otherwise there would be only one connected component). Thus $Conv(S_1)$ and $Conv(S_2)$ are disjoint, contradicting that originally we had a convex hull thrackle.
    See Figure \ref{fig:convex-boundary}.
\end{proof}

\begin{corollary}\label{cor:boundary}
	No boundary edge of $Conv(P)$ can be in two convex hulls from $C(\cS)$.
\end{corollary}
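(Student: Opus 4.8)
The plan is to read this off directly from \lref{boundary}. Suppose, for contradiction, that some boundary edge $e=Conv(p_1,p_2)$ of $Conv(P)$ is contained in two distinct convex hulls $C_1,C_2\in C(\cS)$, i.e.\ $e\subseteq C_1$ and $e\subseteq C_2$ as subsets of the plane. For each $i\in\{1,2\}$, since $e\subseteq C_i$ we have either $e=C_i$ or $e$ is a proper subset of $C_i$; the latter is forbidden by \lref{boundary}. Hence $C_1=e=C_2$, contradicting the requirement in the definition of a convex hull thrackle that the members of $C(\cS)$ are pairwise distinct.

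I do not expect any real obstacle here. The only points worth spelling out are that ``a boundary edge is in a convex hull'' is meant as set containment, and that if a convex hull $C=Conv(S)\in C(\cS)$ coincides with the segment $e=Conv(p_1,p_2)$ then necessarily $S=\{p_1,p_2\}$ (this uses that $P$ is in general position, so no third point of $P$ lies on the segment $p_1p_2$). With this reading, \cref{boundary} is an immediate corollary of \lref{boundary}, requiring no minimality assumption or new case analysis of its own.
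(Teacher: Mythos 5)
Your proof is correct and matches the paper's intent: the paper states the corollary without proof precisely because it follows from Lemma \ref{lem:boundary} by exactly this argument (neither hull can properly contain the boundary edge, so both would have to equal it, contradicting distinctness of the hulls in $C(\cS)$). Your remark that general position forces $S=\{p_1,p_2\}$ when $Conv(S)$ equals the edge is a fine, if optional, clarification.
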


\begin{figure}
	\begin{center}
		\includegraphics[scale=0.8]{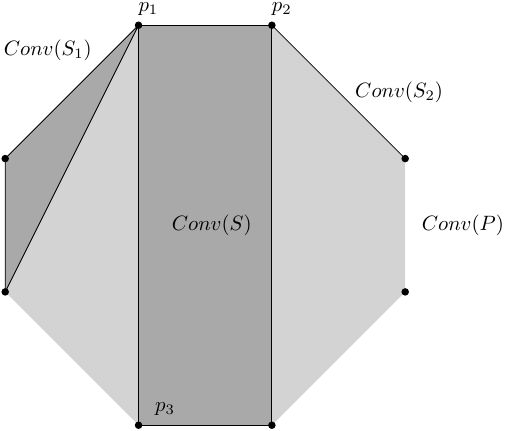}
		\caption{Convex hull containing a boundary edge.}
		\label{fig:convex-boundary}
	\end{center}
\end{figure}

\begin{lemma}\label{lem:n-1}
	No segment $Conv(\{p_1,p_2\})$ with $p_1,p_2\in P$ which is not a boundary edge in $Conv(P)$ is contained in two convex hulls from $C(\cS)$.
\end{lemma}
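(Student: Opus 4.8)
The plan is to argue inside the minimal counterexample and imitate the proof of \lref{boundary}: from a non-boundary chord lying in two hulls we will build a smaller counterexample by deleting a well-chosen vertex of one of the two hulls.

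First the easy reductions. Suppose $e=Conv(\{p_1,p_2\})$ with $p_1,p_2\in P$ is not a boundary edge of $Conv(P)$ and lies in two distinct hulls $C_1=Conv(S_1)$ and $C_2=Conv(S_2)$. Since $p_1,p_2$ are vertices of $Conv(P)$ they are vertices of $C_1$ and of $C_2$, so $p_1,p_2\in S_1\cap S_2$; and $S_1\ne\{p_1,p_2\}$, since otherwise $C_1=e\subsetneq C_2$ contradicts property~1, so $|S_1|\ge3$ and likewise $|S_2|\ge3$. By minimality of $\sum_S|S|$ we may also assume every point of every $S\in\cS$ is a vertex of its convex hull (an interior point could be removed without changing $C(\cS)$). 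The chord $e$ cuts $Conv(P)$ into two closed half-polygons $H^+$ and $H^-$ with $H^+\cap H^-=e$, each of which contains a vertex of $P$ in its relative interior (because $e$ is not a boundary edge).

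The key observation is that \emph{every} hull $C_3\in C(\cS)\setminus\{C_1,C_2\}$ lies entirely in $H^+$ or entirely in $H^-$. Indeed $C_3\cap e\subseteq C_3\cap C_1\cap C_2\subseteq P$, and the only points of $P$ on $e$ are $p_1,p_2$ because $P$ is in convex and general position. The line $\ell$ through $e$ meets $Conv(P)$ in exactly $e$, so $C_3\cap\ell$ is a segment contained in $\{p_1,p_2\}$, hence empty or a single point --- it cannot be all of $e$, as that would give $e\subseteq C_1\cap C_2\cap C_3\subseteq P$, impossible. Deleting this one point (if any) from the convex, hence connected, set $C_3$ keeps it connected and disjoint from $\ell$, so $C_3$ lies in one of the closed half-planes bounded by $\ell$, that is, $C_3\subseteq H^+$ or $C_3\subseteq H^-$.

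Now for the main step: I claim we can delete some vertex $q$ of $S_1$ and still have a convex hull thrackle on $P$, which contradicts minimality. Deleting $q$ only shrinks $C_1$ to $C_1'=Conv(S_1\setminus\{q\})$, so property~3 and all pairwise intersections not involving $C_1$ are untouched, and no two hulls become equal (if $C_1'=C_j$ then $C_j\subsetneq C_1$, contradicting property~1 for $C(\cS)$). Hence the deletion can fail only because (a)~$C_1'\subseteq C_j$ for some $j\ne1$, or (b)~$C_1'$ becomes disjoint from some $C_j$ that met $C_1$. If $|S_1|\ge4$ we take $q\in S_1\setminus\{p_1,p_2\}$, so that $e\subseteq C_1'$; then (a) is impossible, since $C_1'\subseteq C_j$ forces $e\subseteq C_j$, giving $e\subseteq C_1\cap C_2\cap C_j\subseteq P$ when $j\ne2$, while $C_1'\subseteq C_2$ together with $p_1\in e\subseteq C_2$ yields $C_1=Conv(\{p_1\}\cup C_1')\subseteq C_2$, again contradicting property~1. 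Moreover, if (a) failed for every admissible $q$ (necessarily via $C_1'\subseteq C_2$), then for two such $q$ the convex hull of the union of the two shrunk hulls is $C_1$ and is contained in $C_2$, a contradiction; so some admissible $q$ avoids (a). If $|S_1|=3$, so $C_1$ and $C_2$ are triangles sharing the edge $e$, one deletes $p_1$ or $p_2$ instead and rules out (a) in the same spirit, again using $e\subseteq C_2$ and the side observation.

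It remains to rule out (b), and this is the step I expect to be the main obstacle. A hull $C_j$ witnessing (b) must meet $C_1$ only inside the thin ``flap'' $C_1\setminus C_1'$ incident to the deleted vertex (or to $p_1$ or $p_2$ in the triangle case), and by the side observation $C_j$ lies wholly in $H^+$ or in $H^-$. One has to show that the (b)-witnesses for the various admissible deletions cannot all coexist. This is the analogue of the connectivity argument used in the proof of \lref{boundary} (where $p_1$ and $p_2$ were consecutive on $Conv(P)$), but more delicate here because $C_1$ itself may straddle the chord $e$: one must track each witness's position with respect to both $C_1$ and the line $\ell$ at once, and use that flaps at distinct vertices of $C_1$ overlap only in edges or points. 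Once two such witnesses are forced into different connected components of $Conv(P)\setminus C_1$ they are disjoint, contradicting property~2, and the proof is complete.
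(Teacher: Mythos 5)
Your preliminary reductions and your ``key observation'' are correct and match the paper's starting point: every hull other than $C_1,C_2$ lies entirely in one of the two closed half-polygons $H^+,H^-$ cut off by $e$, meeting $e$ in at most one of $p_1,p_2$. The gap is in the main step. Your whole strategy is to delete a vertex of $S_1$ and obtain a smaller counterexample, and this requires ruling out alternative (b) --- that the shrunk hull $C_1'$ becomes disjoint from some $C_j$ --- for at least one admissible choice of $q$. You explicitly leave this unproved, and the difficulty is substantive, not merely technical: a witness $C_{j(q)}$ only has to meet $C_1$ somewhere in the flap $C_1\setminus Conv(S_1\setminus\{q\})$, and nothing you have established prevents a separate witness from existing for every admissible $q$ (they need not lie in different components of $Conv(P)\setminus C_1$, and they can all contain $p_1$, say, or all sit on the same side of $e$, so the disjointness contradiction you invoke in the last sentence is not forced). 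The connectivity argument of \lref{boundary} really does use that $p_1,p_2$ are \emph{consecutive} on $Conv(P)$, which is exactly the hypothesis that fails here; the triangle case $|S_1|=3$, where $e\not\subseteq C_1'$, is also only gestured at. As it stands the proof is incomplete.

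The paper takes a different route that sidesteps deletion entirely, and your key observation is precisely the setup for it. Either no hull other than $C_1$ lies strictly in (say) $H^-$: then $\cS\setminus\{C_1\}$ restricted to $H^+$ is a convex hull thrackle on at most $n-1$ points (here ``not a boundary edge'' guarantees both open sides contain points of $P$) with $m-1$ hulls, contradicting the minimality of $n$. Or there are hulls strictly on both sides of $e$: two such hulls can meet only in $\{p_1,p_2\}$, each hull other than $C_1,C_2$ contains at most one of $p_1,p_2$, and a short propagation argument forces one of $p_1,p_2$ to lie in \emph{every} hull, whereupon Claim \ref{claim:1point} gives $|\cS|\le n-1$, a contradiction. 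If you want to keep your approach you need a genuinely new argument for step (b); otherwise, the case split you have already set up leads directly to the paper's proof.
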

\begin{proof}
	Assume on the contrary and let $C_1,C_2\in C(\cS)$ be two convex hulls that contain both of $p_1$ and $p_2$. Every other convex hull must be disjoint from the interior of $e=Conv(\{p_1,p_2\})$. The segment $e$ splits $Conv(P)$ into two convex sets. Except for $C_1,C_2$, every other convex hull is strictly on one side of $e$ (by strictly we mean that it is disjoint from the other component of $Conv(P)\setminus\{e\}$) and contains at most one of $p_1,p_2$. 
	
	We claim that for both sides of $e$ there is a convex hull strictly on that side. Assume on the contrary, then wlog. there is no convex hull strictly on the left side of $e$. Then $\cS\setminus\{C_1\}$ restricted to the right side (actually only $C_2$ needs to be restricted, the rest are already on the right side) must be a convex hull thrackle on at most $n-1$ points ($n-1$ comes from the fact that there are points of $P$ on both sides of $e$ besides $p_1,p_2$ as it is not a boundary edge of $Conv(P)$) as by restricting to the right side no containments could be introduced. Thus by the minimality of our example we have that $m-1\le n-1$ and thus $m\le n$, a contradiction.
	
	Thus a convex hull strictly on the right and a convex hull strictly on the left exist, they can only meet at points $p_1,p_2$, so every convex hull must contain either $p_1$ or $p_2$ while only $C_1$ and $C_2$ contains both. Therefore if a convex hull on one side contains $p_1$, then on the other side all the additional convex hulls must contain $p_1$, and the same is true the other way. Hence one of $p_1$ or $p_2$ will be contained in all of the convex hulls.
	
	Now by Claim \ref{claim:1point} we get that $\cS$ has at most $n-1$ elements, a contradiction.
\end{proof}

\begin{corollary}\label{convex-no2}
	No pair of points of $P$ can be in two convex hulls from $C(\cS)$.
\end{corollary}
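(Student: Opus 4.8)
The plan is to deduce this immediately from the two preceding results, since the corollary is essentially a restatement of Corollary~\ref{cor:boundary} together with Lemma~\ref{lem:n-1}. First I would suppose, for contradiction, that two distinct points $p_1,p_2\in P$ both belong to two sets $S_1,S_2\in\cS$ with $Conv(S_1)\neq Conv(S_2)$ (so these are two different convex hulls from $C(\cS)$). Then I would observe that since $\{p_1,p_2\}\subseteq S_i$, the segment $e=Conv(\{p_1,p_2\})$ satisfies $e\subseteq Conv(S_i)$ for $i=1,2$; that is, $e$ is a segment spanned by two points of $P$ that is contained in two distinct convex hulls of $C(\cS)$.

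Next I would split into two exhaustive cases according to whether $e$ is a boundary edge of $Conv(P)$. Because $P$ is in convex position, either $p_1$ and $p_2$ are consecutive on the hull, in which case $e$ is a boundary edge and we contradict Corollary~\ref{cor:boundary}, or they are not consecutive, in which case $e$ is not a boundary edge of $Conv(P)$ and we contradict Lemma~\ref{lem:n-1}. Either way we reach a contradiction, which proves the corollary. I do not expect any real obstacle here: the only point requiring a moment's care is the elementary observation that a pair of points lying in a common set $S\in\cS$ forces the whole segment between them to lie in $Conv(S)$, and that for points in convex position the "boundary edge'' and "non-boundary edge'' cases indeed cover every such segment.
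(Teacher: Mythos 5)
Your proposal is correct and matches the paper exactly: the paper also derives this corollary directly from Corollary~\ref{cor:boundary} and Lemma~\ref{lem:n-1}, and your case split on whether $Conv(\{p_1,p_2\})$ is a boundary edge of $Conv(P)$ is precisely the (unstated) reasoning behind the paper's one-line proof.
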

\begin{proof}
    The corollary is a direct consequence of Corollary \ref{cor:boundary} and  Lemma \ref{lem:n-1}.
\end{proof}

Corollary \ref{convex-no2} guarantees that whenever we replace a set $S$ in $\cS$ with a subset of $S$ of size $2$ then the first condition of being a convex hull thrackle will still hold. This will be used repeatedly in the remainder of this section.

\begin{lemma}
	If there are at least $3$ convex hulls from $C(\cS)$ containing a point $p\in P$ then there are two whose intersection is  $\{p\}$.
\end{lemma}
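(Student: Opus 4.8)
The plan is to argue by contradiction: suppose $p\in P$ is contained in convex hulls $C_1,\dots,C_k$ with $k\ge 3$, but every two of them intersect in more than just $\{p\}$. Since $P$ is in convex position, each $C_i$ containing $p$ determines an angular "wedge" at $p$, namely the cone spanned by the two boundary edges of $C_i$ incident to $p$ (or, if $C_i$ is a segment through $p$, a ray). Because by Corollary~\ref{convex-no2} no two points of $P$ lie in two common convex hulls, for $i\ne j$ the set $C_i\cap C_j$ is a connected region whose only possible point of $P$ is $p$; in particular $C_i\cap C_j$ cannot be a single point unless that point is $p$, so by assumption $C_i\cap C_j$ has nonempty interior (or is a nondegenerate segment) sitting near $p$. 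The first step is to make this local picture precise: order the edges of the various $C_i$ emanating from $p$ cyclically around $p$, and observe that $C_i\cap C_j\neq\{p\}$ forces the wedge of $C_i$ and the wedge of $C_j$ to overlap in more than the ray(s) they might share.

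The second step is a "three wedges cannot pairwise overlap without a common interior point" argument at $p$. Since $P$ is in convex position and $p$ is a vertex of $\mathrm{Conv}(P)$, all of $P\setminus\{p\}$ lies in an open halfplane, hence every wedge at $p$ is contained in a fixed halfplane, i.e. has angular measure $<\pi$. For arcs of length $<\pi$ on a circle (equivalently, wedges of angle $<\pi$ at $p$) the Helly-type fact holds: if they pairwise intersect, they have a common point. Applying this to the $k\ge 3$ wedges of $C_1,\dots,C_k$, we get a common ray (or direction) $r$ from $p$ lying in all of them. Now I would locate the point of $P$ "closest to $p$ in direction $r$", or more robustly, take any $C_i,C_j$ and consider a point $x\ne p$ in $C_i\cap C_j$ lying along $r$ (close to $p$); then $x$ lies in the wedge of every $C_\ell$ as well, so after possibly shortening along $r$ one finds a single point lying in $C_1\cap C_2\cap C_3$ that is not in $P$, contradicting property~(3) of a convex hull thrackle. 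The point must be handled carefully because a priori $x$ need not lie in $C_\ell$ itself, only in its wedge — but since $C_\ell$ is convex, contains $p$, and has the wedge as its tangent cone at $p$, every point of the wedge sufficiently close to $p$ does lie in $C_\ell$; choosing $x$ close enough to $p$ along $r$ handles all finitely many $\ell$ simultaneously.

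Alternatively, and perhaps more cleanly in the convex-position setting, the second step can be recast combinatorially: label the two $P$-edges of each $C_i$ at $p$ by the points of $P$ they reach, giving an interval $I_i$ in the cyclic order of $P\setminus\{p\}$ as seen from $p$ (the points of $P$ "between" the two extreme directions of $C_i$). One checks $C_i\cap C_j\neq\{p\}$ implies $I_i\cap I_j\neq\emptyset$ and moreover that this common region, being convex and meeting $\mathrm{Conv}(P)$ in a $2$-dimensional set near $p$, is a wedge of positive angle; since these intervals live on a line (they are sub-intervals of the linear order of $P\setminus\{p\}$ induced by viewing from the extreme vertex $p$), pairwise-intersecting intervals on a line have a common point by the one-dimensional Helly theorem, and then one runs the same "common direction $\Rightarrow$ triple point outside $P$" argument.

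The main obstacle I expect is the passage from "the three wedges/cones share a common ray at $p$" to "there is an honest point in $C_1\cap C_2\cap C_3\setminus P$": one must rule out the degenerate situation where the shared direction is only a boundary ray of some $C_\ell$ and $C_\ell$ is itself a segment along that ray, in which case the "triple intersection" could collapse to the single point $p\in P$ and violate nothing. Disposing of this requires using Corollary~\ref{convex-no2} (to see that at most one $C_\ell$ can be a segment through $p$, and that even then the other two hulls' overlap along the ray is genuinely two-dimensional, or contains a non-$P$ point on that segment) together with the assumption $k\ge 3$, so that at least two of the three hulls contribute $2$-dimensional wedges and their overlap along the common ray, intersected with the third, still yields a non-vertex point.
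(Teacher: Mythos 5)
Your proposal is correct and is essentially the paper's argument: the paper assigns to each hull $C_i$ through $p$ the minimal interval $I_i$ of $P\setminus\{p\}$ in the boundary order (which, since $p$ is a vertex of $\mathrm{Conv}(P)$, coincides with the angular order around $p$), and applies one-dimensional Helly to conclude that either two of the intervals are disjoint --- giving two hulls meeting only in $\{p\}$ --- or all three share a point, in which case points just beyond $p$ in that common direction lie in $C_1\cap C_2\cap C_3$ but not in $P$, violating property (3); this is exactly your ``alternative combinatorial recast.'' One small correction: pairwise-intersecting circular arcs of length $<\pi$ need \emph{not} have a common point (three arcs of length $2\pi/3+\varepsilon$ covering the circle are a counterexample), but your argument is saved by the stronger fact you also establish, namely that all wedges lie in a fixed halfplane, so the arcs sit in a common half-circle and are genuinely intervals on a line.
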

\begin{proof}
	Let $C_1=Conv(S_1),C_2=Conv(S_2),C_3=Conv(S_3)$ be these convex hulls. Let $p,p_2,\dots p_n$ be the points of $P$ ordered counterclockwise on the boundary of $Conv(P)$. Now let $I_i$ be the minimal length interval of $p_2,\dots p_n$ that contains $S_i\cap P\setminus\{p\}$. It is easy to see that if $I_1\cap I_2\cap I_3\ne \emptyset$ then $C_1\cap C_2\cap C_3\notin P$, a contradiction. Otherwise, if, $I_1\cap I_2\cap I_3\ne \emptyset$ then two of these intervals must be disjoint, and then the intersection of the two corresponding $C_i$'s is $\{p\}$.	
\end{proof}

\begin{figure}
	\begin{center}
		\includegraphics[scale=0.8]{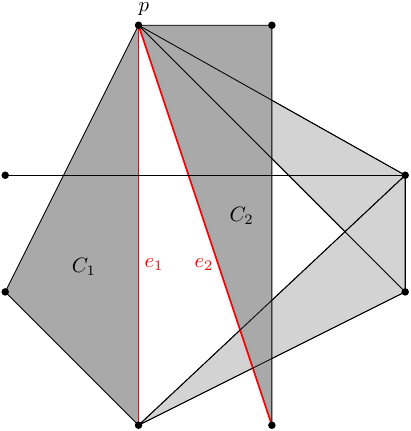}
		\caption{$C_1$ can be replaced with $e_1$ and $C_2$ with $e_2$ to get a smaller convex hull thrackle.}
		\label{fig:convex-replace}
	\end{center}
\end{figure}

\begin{lemma}\label{lem:onlyp}
	If there are two convex hulls whose only intersection is $\{p\}$ for some $p\in P$ then both of them are segments.
\end{lemma}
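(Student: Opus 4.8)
\textbf{Proof proposal for Lemma \ref{lem:onlyp}.}

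The plan is to argue by contradiction: suppose $C_1=Conv(S_1)$ and $C_2=Conv(S_2)$ satisfy $C_1\cap C_2=\{p\}$ for some $p\in P$, and suppose that at least one of them, say $C_1$, is not a segment. The key geometric observation is that since $P$ is in convex position and $p$ is the unique common point, the two convex hulls "touch" only at the vertex $p$; in particular $p$ must be a vertex of both $S_1$ and $S_2$, and locally near $p$ the hulls $C_1$ and $C_2$ occupy disjoint angular sectors. Consequently there is a line $\ell$ through $p$ that separates $C_1$ from $C_2$ except at $p$ itself (a supporting line at the touching point). Every other convex hull $C\in C(\cS)$ must meet both $C_1$ and $C_2$, and by Property 3 these intersections, if they avoid $p$, cannot overlap with other hulls' intersections except in $P$; the aim is to show that forcing $C$ to reach both sides of $\ell$ while $C_1$ is "fat" leads to a contradiction with minimality, exactly as in Lemma \ref{lem:n-1}.

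Concretely, first I would replace $C_2$ by a segment $e_2=Conv(\{p,q\})$ where $q\in S_2$ is chosen so that $e_2$ still meets $C_1$ only at $p$ — such a $q$ exists because $C_2\cap C_1=\{p\}$ forces every vertex of $S_2$ to lie in the closed half-plane bounded by $\ell$ not containing the interior of $C_1$, so any single edge of $C_2$ at $p$ works. By Corollary \ref{convex-no2} this replacement keeps Property 1, and it cannot violate Properties 2 or 3 since $e_2\subseteq C_2$ and we are shrinking. This gives a convex hull thrackle with smaller $\sum_{S}|S|$ unless Property 2 fails for some pair $(e_2, C)$, i.e. some $C\in C(\cS)$ was meeting $C_2$ only in the part we discarded. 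Handling that case is where I would use the convex-position structure: such a $C$ must lie strictly on the $C_2$-side of $\ell$, hence be disjoint from the interior of $C_1$ on the other side, and since it must still meet $C_1$ it can only touch $C_1$ at $p$ as well; iterating, I would show all "problematic" hulls pass through $p$, and then apply Lemma above together with Claim \ref{claim:1point} style counting, or simply restrict everything to one side of $\ell$ to get a smaller counterexample on at most $n-1$ points.

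Once $C_2$ is reduced to the segment $e_2=Conv(\{p,q\})$, I would run the symmetric argument on $C_1$: now $C_1$ must meet the segment $e_2$, and since $C_1\cap e_2=\{p\}$ still holds (the intersection only shrank), the segment $e_2$ together with the fat hull $C_1$ is precisely the configuration forbidden by the interior-disjointness forced in Lemma \ref{lem:boundary}/Lemma \ref{lem:n-1}: the line $\ell$ has $C_1$ (with nonempty interior) strictly on one side and $q$ strictly on the other, so any third hull meeting both $C_1$ and $e_2$ away from $p$ would have to cross $\ell$ and pass through the interior of $C_1$, making a triple intersection outside $P$, unless it too passes through $p$ — again collapsing to the one-point case and Claim \ref{claim:1point}. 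I expect the main obstacle to be the careful bookkeeping in the "replacement fails" case: ruling out that discarding part of $C_2$ (or $C_1$) destroys Property 2 for several hulls at once, which requires exploiting that on a convex point set the relevant intervals $I_i$ from the previous lemma are nested or disjoint in a controlled way. Modulo that case analysis, the contradiction with minimality of the counterexample is immediate.
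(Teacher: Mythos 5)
Your high-level plan is the paper's: replace the non-segment hull(s) by a boundary edge at $p$ and contradict minimality. But the proposal has a real gap exactly where you flag one. You take an \emph{arbitrary} edge of $C_2$ at $p$ (``any single edge of $C_2$ at $p$ works''); that choice only ensures the new segment still meets $C_1$ in $\{p\}$, which is automatic, and leaves open whether it still meets every \emph{other} hull --- the only property that can actually fail. With the wrong edge it does fail: a hull can enter $C_2$ across the long chord of $C_2$ at $p$ while avoiding the short edge entirely, so the ``replacement fails'' case is not an artifact you can defer. The paper's fix is to specify the edge: for each $C_i$ take the side of its wedge at $p$ adjacent to the other hull's wedge, i.e.\ the chord $e_i$ from $p$ to the last vertex of $S_i$ in the angular order at $p$ towards the other hull. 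Then any other hull $C$ either contains $p$ (so meets $e_1$ and $e_2$ there) or contains a segment from a point of $C_1\setminus\{p\}$ to a point of $C_2\setminus\{p\}$; the chord $e_1$ splits $Conv(P)$ into a part containing $C_1$ and a part containing $C_2\setminus\{p\}$, and the rest of each part's boundary lies on $\partial Conv(P)$, so that segment must cross $e_1$, and symmetrically $e_2$. Hence the replacement is always a convex hull thrackle and no failure case arises.

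The fallback you sketch for the failure case also does not close: from ``$C$ meets $C_2$ only in the discarded part'' you infer that $C$ lies strictly on the $C_2$-side of $\ell$, but $C$ must also meet $C_1$, which lies on the other side of $\ell$ except for $p$, so either $C$ crosses $\ell$ (contradicting that inference) or $C$ contains $p$; and even if every problematic hull passes through $p$, Claim~\ref{claim:1point} requires \emph{all} hulls to contain $p$, so the counting you invoke does not apply as stated. Choosing the correct edge removes the need for any of this.
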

\begin{proof}
	
Indeed, otherwise we can replace them with their appropriate boundary edges incident to $p$ (the edge closer to the other convex hull) and it is easy to see that they must still intersect every other convex hull, thus it remains to be a convex hull thrackle. Also, it would be a smaller one, a contradiction. See Figure \ref{fig:convex-replace}.	
\end{proof}

\begin{figure}
	\begin{center}
		\includegraphics[scale=0.8]{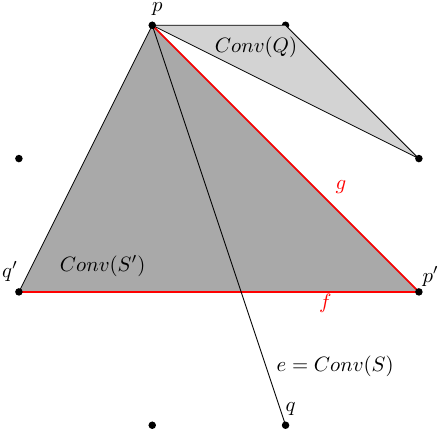}
		\caption{$S'$ can be replaced with either $f$ or $g$ to get a smaller convex hull thrackle.}
		\label{fig:convex-notonlysegment}
	\end{center}
\end{figure}

\begin{lemma}
	There cannot be a set $S\in \cS$ of size two and a set $S'\in \cS$ of size $\ge 3$ that have a nonempty intersection. 
\end{lemma}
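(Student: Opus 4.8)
The plan is to follow the same ``replace a large convex hull by a small one'' strategy that powered the previous lemmas, and to derive a contradiction with minimality. Suppose $S=Conv(\{a,b\})\in\cS$ is a segment and $S'\in\cS$ with $|S'|\ge 3$ satisfies $S\cap S'\ne\emptyset$. By Conditions (2) and (3) the intersection $S\cap S'$ is a single point, which is either one of $a,b$ (a vertex of $P$) or a point in the relative interior of the segment $e=Conv(\{a,b\})$ lying on an edge of $\partial S'$. The second case, where $e$ enters the interior of $C'=Conv(S')$ and leaves again, cannot happen: $e\cap C'$ would then be a chord of $C'$ with two boundary crossing points, and one can walk a tiny bit along $e$ from that chord to find a second intersection point of $e$ with $\partial C'$ not in $P$, but Condition~(3) with a third convex hull is not immediately violated — the real obstruction is that $e$ would pass through the interior of $C'$, so that $C'$ and $e$ meet in a segment, contradicting that a third hull's intersection with both must be a point. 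Rather than belabor this, I would argue directly: since the only intersection of $S$ and $S'$ is a single point $q$, and $S=e$ is one-dimensional, $q$ is an endpoint of the sub-segment $e\cap C'$; if $e\cap C'$ were nondegenerate we could shorten $S$ (replace $\{a,b\}$ by $\{a,q'\}$ for a suitable $q'\in P$) and stay a convex hull thrackle, contradicting minimality of $\sum|S|$. Hence $q\in\{a,b\}$, say $q=a$, and $b\notin C'$.

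Now I would try to replace $S'$ by a segment. Let $f$ and $g$ be the two boundary edges of $C'$ incident to $a$ (using that $a$ is a vertex of $C'$ — this needs justification: if $a$ were in the interior of an edge of $\partial S'$ then again $e$ would cross into the interior of $C'$, the degenerate-chord case already excluded, so $a$ is a vertex of $S'$). I want to show that one of $Conv(f)$ or $Conv(g)$, used in place of $S'$, still meets every other convex hull from $C(\cS)$, and introduces no containment (the latter is automatic by Corollary~\ref{convex-no2}, since the replacement is a pair of points of $P$). This would yield a smaller convex hull thrackle, the desired contradiction. The geometric picture, as in Figure~\ref{fig:convex-notonlysegment}: the segment $e$ emanates from $a$ to $b$ outside $C'$; near $a$, the region $Conv(P)\setminus\{e\}$ together with the two edges $f,g$ divides a neighborhood of $a$, and every convex hull other than $S$ lies (at $a$ or away from it) on one ``side'' in a way that forces it to meet $f$ or $g$.

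The key step — and the main obstacle — is proving that claim: that choosing $f$ or $g$ appropriately retains intersection with all other hulls. I would argue as follows. Let $C''=Conv(S'')$ be any other convex hull, $S''\ne S,S'$. We know $C''\cap C'\ne\emptyset$. If $C''\cap C'$ contains a point other than $a$, that point lies on $\partial C'$, hence on $f$ or $g$ or on some ``far'' edge of $\partial C'$; I need to rule out that it only meets far edges while $f,g$ are both missed. Here is where convex position and the segment $e$ come in: $e=Conv(\{a,b\})$ already crosses $C''$ (since $S\cap S''\ne\emptyset$), so $C''$ reaches the chord direction of $e$; meanwhile $b\notin C'$, so $e$ leaves $C'$ immediately at $a$, which pins down on which side the ``far'' boundary of $C'$ lies relative to $e$. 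A careful case analysis on whether $C''$ passes through $a$, and on which of the two arcs of $\partial C'$ (the one ``between'' $f$ and the $e$-side, or between $g$ and the $e$-side) it reaches, should show that one fixed choice among $f,g$ — determined once and for all by the position of $b$, i.e. the edge ``farther from $b$'' or the like — works simultaneously for all $C''$. I expect this to require the fact (from the earlier lemmas and corollaries, especially Corollary~\ref{convex-no2} and the lemma on three hulls through a point) that the hulls through $a$ are ``interval-like'' around the boundary of $Conv(P)$, so their relative order is controlled. Once the uniform choice is established, replacing $S'$ by that pair of points gives a convex hull thrackle with the same number of sets but smaller $\sum_{S\in\cS}|S|$ (strictly, since $|S'|\ge 3 > 2$), contradicting minimality and completing the proof.
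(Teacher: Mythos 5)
Your opening geometric analysis goes wrong, and it matters. The hypothesis is that the point sets $S$ and $S'$ share a point $p\in P$; the relevant geometric fact, supplied by Lemma~\ref{lem:onlyp}, is that $\{p\}$ must then be a \emph{proper} subset of $Conv(S)\cap Conv(S')$ (otherwise both hulls would be segments, contradicting $|S'|\ge 3$). So the segment $e=Conv(S)$ really does enter the interior of $Conv(S')$ at the shared vertex and exit through some boundary edge $f=Conv(\{p',q'\})$. You instead try to rule this configuration out and reduce to the case $e\cap Conv(S')=\{a\}$ with $b\notin Conv(S')$ --- but that is exactly the configuration Lemma~\ref{lem:onlyp} forbids, and your argument for excluding the nondegenerate case (``shorten $S$ by replacing $\{a,b\}$ with $\{a,q'\}$'') is not a valid move: $|S|=2$ already, the replacement is not a subset of $S$, it does not decrease $\sum|S|$, and nothing guarantees the result is still a thrackle. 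Your claim that Conditions~(2) and~(3) force $Conv(S)\cap Conv(S')$ to be a single point is simply false; Condition~(3) only constrains \emph{triple} intersections.

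The second, larger gap is the replacement step, which you explicitly leave as a hope (``should show\dots I expect\dots''). Your candidate replacements --- the two boundary edges of $Conv(S')$ incident to the shared vertex --- are also not the right ones: a hull $C''$ can meet $e$ beyond $f$ and touch $Conv(S')$ only near $f$, missing both edges at $p$. The paper's argument is a two-stage replacement: first try replacing $S'$ by $f$ (the edge crossed by $e$, generally \emph{not} incident to $p$); if that fails, there is a witness hull $Conv(Q)$ disjoint from $f$, and one shows $p\in Q$ and that $Conv(Q)\setminus\{p\}$ lies strictly on one side of the line through $p$ and $p'$ while $e\setminus\{p\}$ lies strictly on the other. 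Then the diagonal $g=Conv(\{p,p'\})$ works as the replacement, because every other hull must meet both $Conv(Q)$ and $e$ and hence must cross $g$. That ``the witness $Q$ and the segment $e$ pin every other hull against the chord $pp'$'' is the key idea your proposal is missing; without it (or a worked-out substitute) the proof does not go through.
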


\begin{proof}
	Assume on the contrary that $S\cap S'$ contains a point $\{p\}$, $p\in P$.
	By Lemma \ref{lem:onlyp} $\{p\}$ must be a proper subset of $Conv(S)\cap
 Conv(S')$.
 
	Let $S=\{p,q\}$. Let $f=Conv(\{p',q'\})$ be the unique edge on the boundary of $Conv(S')$ whose interior intersects the segment $e=Conv(S)$. If replacing $S'$ with $\{p',q'\}$ we get a convex hull thrackle then it is a smaller counterexample, a contradiction. Otherwise, if it is not a convex hull thrackle then there must be a convex hull $Conv(Q)\in C(\cS)$ such that $Q\cap f=\emptyset$. As $Conv(Q)\cap Conv(S)\neq \emptyset$ we get that $p\in Q$. Wlog. $p'$ and $Q\setminus\{p\}$ are contained in the same connected component of $Conv(P)\setminus S$. Let $g=\{p,p'\}$. As $Conv(Q)$ is disjoint from $f$, $Conv(Q)\setminus\{p\}$ must be disjoint from $g$ and thus be in one of the connected components of $Conv(P)\setminus g$, precisely in the one that does not contain $q$. In this case, we replace $S'$ with $g$. As every other convex hull must intersect both $Conv(Q)$ and $Conv(S)$, they all must intersect the segment $g$ as well, and so this is still a convex hull thrackle, but a smaller one, a contradiction. See Figure \ref{fig:convex-notonlysegment}.		
\end{proof}

The lemmas imply that each point of $P$ is either only in sets from $\cS$ of size $2$ or only in sets from $\cS$ of size $\ge 3$, moreover, in the latter case, there can be at most two such sets for each point. Thus $P$ can be split into two disjoint subsets $P=P_2\cup P_3$, such that the sets $S\in \cS$ that contain $2$ points of $P$ contain only points from $P_2$ while the sets $S \in \cS$ that contain at least $3$ points of $P$ contain only points from $P_3$. Moreover for every $p\in P_3$ at most two sets from $\cS$ contain $p$. Thus there are at most $2/3|P_3|$ sets of size $\ge 3$ while for $P_2$ we can apply the result about (convex) linear thrackles (see Corollary \ref{cor:linear} for a proof) to conclude that there are at most $|P_2|$ sets of size $2$, altogether there are at most $n$ sets in $\cS$, a contradiction.
\end{proof}

\begin{claim}\label{claim:convexextr}
    If a convex hull thrackle on $n$ points in convex position has $n$ convex hulls, then there is an underlying linear thrackle, i.e., there is an injection $S\Rightarrow S': S'\subseteq S, |S'|=2$ from $\cS$ such that the image of $\cS$ is a linear thrackle.
\end{claim}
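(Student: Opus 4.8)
The plan is to imitate the proof of Theorem~\ref{thm:convex}, now for a convex hull thrackle with exactly $n$ hulls, and to read off the equality case of the counting. Call a family $\cS^*$ a \emph{refinement} of $\cS$ if $C(\cS^*)$ is a convex hull thrackle on the \emph{same} point set $P$ with $|\cS^*|=|\cS|=n$, together with an injection $\iota\colon\cS\to\cS^*$ (necessarily a bijection) satisfying $\iota(S)\subseteq S$ for all $S\in\cS$. Since $\cS$ is a refinement of itself and $\sum_{T\in\cS^*}|T|\ge 2n$, I fix a refinement $\cS^*$ minimizing $\sum_{T\in\cS^*}|T|$; it suffices to prove that every set of $\cS^*$ has size $2$, for then $C(\cS^*)$ is a linear thrackle and $\iota$ is the injection the claim asks for. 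The point is that every reduction in the proof of Theorem~\ref{thm:convex} replaces a set by a proper subset of size at least $2$ while keeping a convex hull thrackle on $P$ with $n$ hulls: because $P$ is in convex position, $S\mapsto Conv(S)$ is injective and no two sets of $\cS^*$ are nested, so shrinking a set never makes its hull coincide with another one, and it can never leave a point of $P$ in no hull, since a convex hull thrackle on fewer than $n$ points has fewer than $n$ hulls by Theorem~\ref{thm:convex}. Hence each such reduction would give a strictly smaller refinement, contradicting minimality; so none of the reductions of Theorem~\ref{thm:convex} applies to $\cS^*$.

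Grant for the moment the analogue of Corollary~\ref{convex-no2}: no pair of points of $P$ lies in two hulls of $C(\cS^*)$. Then, exactly as in Theorem~\ref{thm:convex}, the reduction of Lemma~\ref{lem:boundary} not applying means no boundary edge of $Conv(P)$ is a proper subset of a hull; the unlabeled lemma that among any three hulls through a common point two meet only in that point needs no reduction; and, since replacing a hull by one of its short boundary edges now cannot violate the non-nesting condition, Lemma~\ref{lem:onlyp} and the final (unlabeled) lemma hold as well. Combining these gives the partition $P=P_2\cup P_3$ with no isolated points, where size-$2$ sets use only $P_2$, sets of size $\ge 3$ use only $P_3$, and (from the three-hulls lemma together with Lemma~\ref{lem:onlyp}) each point of $P_3$ lies in at most two sets. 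The size-$2$ sets form a linear thrackle on $P_2$, so there are at most $|P_2|$ of them by Corollary~\ref{cor:linear}, while a double count of incidences bounds the number of sets of size $\ge 3$ by $\tfrac23|P_3|$; thus $n=|\cS^*|\le|P_2|+\tfrac23|P_3|\le|P_2|+|P_3|=n$, which forces $P_3=\emptyset$ and finishes the argument.

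The step that genuinely breaks for $m=n$, and which I expect to be the bulk of the work, is the analogue of Corollary~\ref{convex-no2}, because the proof of Lemma~\ref{lem:n-1} only delivers $m\le n$, not a contradiction, when $m=n$. I would prove it as follows. Suppose distinct hulls $C_1,C_2\in C(\cS^*)$ share a pair $\{p_1,p_2\}$. If $p_1p_2$ is a boundary edge of $Conv(P)$, then one of $C_1,C_2$ properly contains it and the reduction of Lemma~\ref{lem:boundary} applies, contradicting minimality. If $p_1p_2$ is not a boundary edge, follow the proof of Lemma~\ref{lem:n-1}: every other hull lies strictly on one side of $e=p_1p_2$ and contains at most one of $p_1,p_2$. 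If some side of $e$ carries no hull strictly on it, then $C_1$ and $C_2$ both straddle $e$, the restriction of $\cS^*\setminus\{C_1\}$ to the non-empty side is a convex hull thrackle with $n-1$ hulls, hence extremal on $n-1$ points by Theorem~\ref{thm:convex}; this forces exactly one point $q$ of $P$ onto the empty side, puts $q$ in only $C_1$ and $C_2$, and makes $C_1\cap C_2$ contain the full triangle $qp_1p_2$, so property~3 confines every other hull to meeting this triangle only at $p_1$ or $p_2$, and pushing this through (or reducing by replacing $C_1$ with its part on the non-empty side) contradicts either minimality or Theorem~\ref{thm:convex}. Otherwise there is a hull strictly on each side of $e$; then, as in Lemma~\ref{lem:n-1}, property~3 against $C_1$ forces each such pair of opposite hulls to meet in a single one of $p_1,p_2$, so every hull contains $p_1$ or $p_2$, and one more use of property~3 puts one of $p_1,p_2$ in all hulls, whence $m\le n-1$ by Claim~\ref{claim:1point}, a contradiction. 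Once this is in hand the remaining lemmas and the counting run as above, giving the claim.
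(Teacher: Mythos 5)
Your overall strategy coincides with the paper's: rerun the reductions from the proof of Theorem~\ref{thm:convex} on a minimal refinement, observe that the final counting forces every hull to be a segment, and identify the analogue of Lemma~\ref{lem:n-1} (hence of Corollary~\ref{convex-no2}) as the only step that genuinely breaks when $m=n$. Your treatment of the two-sided sub-case (a hull strictly on each side of $e$, leading to Claim~\ref{claim:1point}) is correct, and the closing count $n\le |P_2|+\tfrac23|P_3|$ forcing $P_3=\emptyset$ matches the paper. (A minor looseness: ``shrinking a set never makes its hull coincide with another one'' does not address a shrunk set becoming \emph{properly contained} in another set; each individual reduction has to rule this out, which is exactly what Corollary~\ref{convex-no2} is for once it is available.)

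The genuine gap is in the one-sided sub-case of your proof of the Corollary~\ref{convex-no2} analogue. After Theorem~\ref{thm:convex} forces exactly one point $q$ of $P$ strictly on the empty side, you assert that $C_1$ and $C_2$ ``both straddle $e$'' and that $q$ lies in both, so that $C_1\cap C_2$ contains the triangle $qp_1p_2$. Neither is justified, and in fact the scenario $q\in S_1\cap S_2$ is the easy (indeed vacuous) one: restricting \emph{both} $C_1$ and $C_2$ to the non-empty side either produces a convex hull thrackle with $n$ hulls on $n-1$ points (contradicting Theorem~\ref{thm:convex}) or forces $S_1\subseteq S_2$ or $S_2\subseteq S_1$ (contradicting condition~1). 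The case that actually remains is $q$ in exactly one of the two sets, say $q\in S_1\setminus S_2$, where one finds $S_1\setminus\{q\}\subsetneq S_2$, i.e.\ the two hulls become nested upon restriction. There your one concrete suggestion---replacing $C_1$ by its part on the non-empty side---is precisely the move that fails, since $Conv(S_1\setminus\{q\})\subsetneq Conv(S_2)$ violates condition~1, and ``pushing this through'' is left unspecified. This nested configuration is where the paper does its real work: it applies the argument of Lemma~\ref{lem:boundary} to the restriction of $\cS\setminus\{C_1\}$, in which $p_1p_2$ has become a boundary edge, to delete $p_1$ or $p_2$ from $C_2$, and then verifies that reinserting $C_1$ still yields a convex hull thrackle. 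Without this step (or a substitute for it) your proof of the no-shared-pair statement, and hence everything downstream, is incomplete.
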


\begin{proof}
Let us have now an extremal example, i.e. a convex hull thrackle with $m=n$ convex hulls. Going through the proof of Theorem \ref{thm:convex} we see that we did not directly use that our convex hull thrackle is a counterexample, instead we always replaced some non-segment convex hull with its proper subset contradicting its minimality.

Thus repeating the algorithm steps in the proof of Theorem \ref{thm:convex} we repeatedly replace convex hulls with smaller convex hulls to get another convex hull thrackle with the same number of points and convex hulls as before. At the end, according to the computation at the end of the proof of Theorem \ref{thm:convex}, if there are non-segment convex hulls left then we have less than $n$ convex hulls, a contradiction. Thus at the end every convex hull must be a segment, finishing the proof.

There is one exception when in the replacement we used the fact that $m=n+1$. This was in Lemma \ref{lem:n-1} where we used that $m=n+1$ to show that there are convex hulls strictly on both sides of the segment $e$. Let us modify slightly the argument from there. Now we work with a convex hull thrackle with $m=n$ and assume on the contrary that on the left side there is no convex hull. If the left side (including $p_1,p_2$) has size at least $n_1\ge 4$ then the restriction of $\cS\setminus\{C_1\}$ is a convex hull thrackle on at most $n-2$ points and with $m-1=n+1-1=n-2+1$ thrackles, a contradiction by Theorem \ref{thm:convex}.

\begin{figure}
	\begin{center}
		\includegraphics[scale=1]{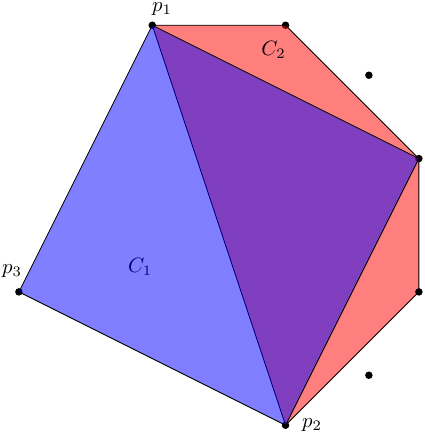}
		\caption{Lemma \ref{lem:n-1} modified for extremal examples.}
		\label{fig:lemmod}
	\end{center}
\end{figure}

If the restrictions of $C_1$ and $C_2$ on the right side do not contain each other then $\cS$ restricted to the right side is a convex hull thrackle on $n-1$ points and $m$ convex hulls, a contradiction by Theorem \ref{thm:convex}. If the restrictions of $C_1$ and $C_2$ on the left side do not contain each other then $n_1\ge 4$ and we are done. Thus on each side the restrictions of $C_1$ and $C_2$ must contain each other and $n_1=3$. Thus if $p_3$ is the unique vertex to the left of $e$ then wlog. $C_1$ contains $p_3$ while $C_2$ does not contain $p_3$ while $C_2\supsetneq C_1\setminus\{p_3\}$. Now applying the proof of Lemma \ref{lem:boundary} on the restriction of $\cS\setminus\{C_1\}$ on the right side we get that we can remove $p_1$ or $p_2$ from $C_2$ to get a convex hull thrackle. This together with $C_1$ must be also a convex hull thrackle. See Figure \ref{fig:lemmod}.
\end{proof}

\begin{figure}
	\begin{center}
		\includegraphics[scale=1]{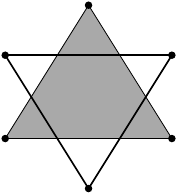}
		\caption{A convex hull thrackle in which no triangle can be replaced with a segment inside it.}
		\label{fig:constr-nocont}
	\end{center}
\end{figure}

We note that for non-extremal convex thrackles it is not always true that there is an underlying linear thrackle, see, e.g., the construction for Theorem \ref{thm:counter} or Figure \ref{fig:constr-nocont} for such an example with points in convex position.
\section{General case lower bound construction}

\begin{figure}
	\begin{center}
		\includegraphics[scale=1]{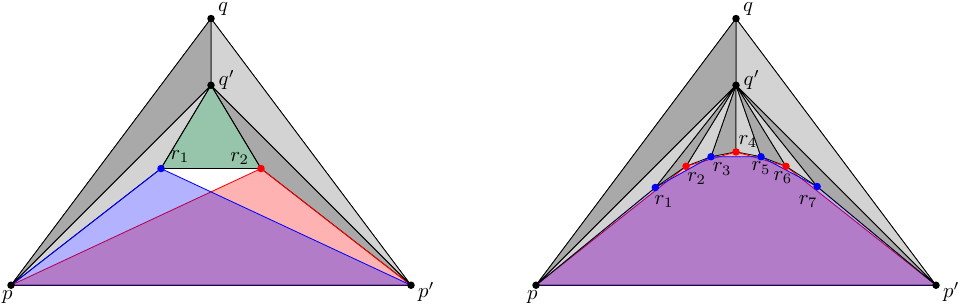}
		\caption{Convex hull thrackle with $n+1$ convex hulls for $n=6$ and $n=11$.}
		\label{fig:construction}
	\end{center}
\end{figure}

\begin{proof}[Proof of Theorem \ref{thm:counter}]
See Figure \ref{fig:construction} for an illustration. The points $p,p',q$ form a triangle which contains $q'$ and there are additional points $r_1, \dots, r_{n-4}$ inside $ppq'\Delta $ such that $r_1,\dots, r_{n-4},p',p$ are in convex position in this order positioned such that they can be seen from $q'$ also in this order. There are $n-1$ triangular convex hulls incident to $q'$, one formed by each containment minimal angle at $q'$ determined by the remaining points.
There are two more convex hulls, both contain $p,p'$. One contains in addition the points $r_i$ with $i$ even and the other the points $r_j$ with $j$ odd.
 
It is easy to check that this is a convex hull thrackle on $n$ points and with $n+1$ convex hulls.
\end{proof}

\section{General case upper bound proof}

\begin{proof}[Proof of Theorem \ref{thm:2n}.]
If there is a set $S\in \cS$ with $|S|=1$ then $|\cS|=1$, we are done, therefore we can assume that every set in $\cS$ has size at least two. For a convex hull thrackle $C(\cS)$ on point set $P$ we define its \emph{boundary diagram} $D(\cS)$ (or simply $D$ when $\cS$ is clear from the context) the following way: we replace each convex hull of with at least $3$ vertices with the set of its boundary segments. Notice that each such segment is joining two vertices of the graph. If the convex hull was a segment, then we replace it with $3$ copies of this segment. This way we get a multiset of segments. If on a pair of points there are $k$ segments, then we regard this as one segment with weight $k$. Thus $D$ is a weighted (not multi)set of segments connecting points of $P$. Notice that by definition of a convex hull thrackle, a segment $e$ of $D$ that is also present in $C(\cS)$ as a convex hull (i.e., $e=Conv(S)$ with $S\in\cS$, $|S|=2$) has weight exactly $3$ (as by definition no other $S'$ can contain $S$), while the rest of the segments of $D$ can have weight at most $2$ (as by definition a segment can be the boundary segment of at most two convex hulls).
 
Since every convex hull has been replaced with at least $3$ segments when defining the boundary diagram, it is enough to show that the sum of weights of segments in $D$ is at most $6n$, where $n=|P|$. 
 
 See Figure \ref{fig:def-leftto} for the next set of definitions.
 
 \begin{definition}
 	Given points $a,p,b$, the angle $apb\angle$ is defined to be the angle at $p$ we get when we rotate the vector $\vec{pa}$ counterclockwise (around $p$) to make it coincide with $\vec{pb}$.
 	
	An object (a point, a set of points, or a convex hull) is \emph{to the left}  of a vector $\vec{pa}$ if for every point $b$ of the object we have $0\degree<|apb\angle|\le 180\degree$. $p$ is also said to be left to $\vec{pa}$. 
 \end{definition}

\begin{figure}
	\begin{center}
		\includegraphics[scale=1]{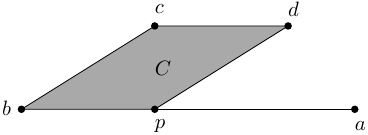}
		\caption{ $|apc\angle|=90\degree$. $C,b,c,d$ and $p$ are left to $\vec pa$ but $a$ is not. $dpb\angle$ is a wedge at $p$ whose left side is $pb$. $C$ witnesses that $pa$ is a leftie from $p$.}
		\label{fig:def-leftto}
	\end{center}
\end{figure}

\begin{definition}
	Given a point $p\in P$ and a point set $S\in \cS$ for which $p$ is a vertex of $Conv(S)$. Let $a$ and $b$ be the vertices of $Conv(S)$ next to $p$ on the boundary of $Conv(S)$ such that $b$ is left to $\vec{pa}$ (where $a=b$ is allowed when $Conv(S)$ is a segment). The $apb\angle$ is called the \emph{wedge} at $p$. We also say that $pb$ is the \emph{left side of the wedge}.	
\end{definition}

 \begin{definition}
  A segment $pa$ of $D$ is called a \emph{leftie from $p$}, if there is a point set $S\in \cS$ for which $S$ (equivalently $Conv(S)$) is to the left of $\vec{pa}$. We say that such an $S$ (also $Conv(S)$) witnesses that $pa$ is a leftie from $p$. Otherwise, it is called a \emph{non-leftie from $p$}.
 \end{definition}

We note that being a leftie could be defined to any pair of points the same way, not just for pairs that form a segment of $D$, but we do not need this. Also, usually the convex hull $C$ witnessing that $pa$ is a leftie from $p$ will be incident to $p$.

 \begin{lemma}\label{lem:lefties}
 	No segment can be a leftie from both of its endpoints. 
 \end{lemma}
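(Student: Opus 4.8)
The plan is to read the definition of ``to the left'' geometrically and show that the two hypothetical witnessing convex hulls would be forced to lie in two disjoint regions of the plane, which is impossible for a convex hull thrackle by its second property.

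Suppose, for contradiction, that some segment $pa$ of $D$ is a leftie both from $p$ and from $a$; let $S\in\cS$ witness the former (so $Conv(S)$ is to the left of $\vec{pa}$) and let $S'\in\cS$ witness the latter (so $Conv(S')$ is to the left of $\vec{ap}$). Choose coordinates with $p=(0,0)$ and $a=(1,0)$, and let $\ell$ be the $x$-axis. First I would translate ``a point $b$ is to the left of $\vec{pa}$'', i.e.\ $0\degree<|apb\angle|\le 180\degree$ together with the convention putting $p$ on the left, into the statement: $b$ lies in the open upper half-plane, or $b$ lies on the closed ray $\{(t,0):t\le 0\}$ of $\ell$ pointing from $p$ away from $a$ (this ray accounts for the endpoint $p$ and for the degenerate case $|apb\angle|=180\degree$). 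Hence $Conv(S)\subseteq L_p:=\{(x,y):y>0\}\cup\{(t,0):t\le 0\}$. Since the left of $\vec{ap}$ is the other side of $\ell$, the symmetric reading gives $Conv(S')\subseteq L_a:=\{(x,y):y<0\}\cup\{(t,0):t\ge 1\}$.

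The one step that needs care is checking $L_p\cap L_a=\emptyset$. Away from $\ell$ the two regions lie in the two open half-planes bounded by $\ell$, hence are disjoint there; on $\ell$ itself, $L_p$ contributes only the closed ray $\{(t,0):t\le 0\}$ and $L_a$ only the closed ray $\{(t,0):t\ge 1\}$, and these are disjoint because $p\ne a$ and each points away from the segment $pa$. This boundary bookkeeping — that the ``angle $180\degree$'' rays together with the endpoints $p,a$ are exactly the parts of $L_p$ and $L_a$ meeting $\ell$ — is essentially the whole content of the lemma, so I would spell it out rather than gesture at it.

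To finish: since $Conv(S)\subseteq L_p$, $Conv(S')\subseteq L_a$, and $L_p\cap L_a=\emptyset$, we get $Conv(S)\cap Conv(S')=\emptyset$. If $S=S'$ this forces $Conv(S)\subseteq L_p\cap L_a=\emptyset$, contradicting $Conv(S)\ne\emptyset$; hence $S\ne S'$, and then $Conv(S)\cap Conv(S')=\emptyset$ contradicts the second property of a convex hull thrackle. So no segment of $D$ can be a leftie from both of its endpoints. I expect no genuine obstacle here: once the region $L_p$ is correctly identified the argument is forced, and the only subtlety is handling the half-open boundaries of $L_p$ and $L_a$.
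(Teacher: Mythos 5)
Your proof is correct and is essentially the paper's argument: the paper also observes that the two witnessing convex hulls must lie on opposite sides of the line through $pa$ (meeting that line only in the two disjoint closed rays emanating away from the segment), hence are disjoint, contradicting property 2 of a convex hull thrackle. You merely spell out the boundary bookkeeping and the degenerate case $S=S'$ that the paper leaves to a figure.
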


\begin{proof}
	Assume on the contrary. Then the two convex hulls witnessing that the segment is a leftie from both of its endpoints must be disjoint, a contradiction. See Figure \ref{fig:no2lefties}.
\end{proof} 

 \begin{figure}
 	\begin{center}
		\includegraphics[scale=1]{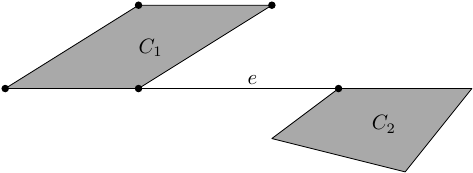}
 		\caption{If $e$ is a leftie from both of its endpoints then the witnessing convex hulls $C_1$ and $C_2$ are disjoint.}
 		 \label{fig:no2lefties}
 	\end{center}
 \end{figure}

\begin{observation}\label{obs:no2nonleftie} For any two wedges at $p$ that intersect only in $p$, their left sides cannot be both non-lefties from $p$.
\end{observation}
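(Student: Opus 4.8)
The plan is to argue by contradiction, working entirely with the arcs of directions that the two cones at $p$ occupy. Write the two wedges as $W_1=a_1pb_1\angle$ and $W_2=a_2pb_2\angle$, coming from sets $S_1,S_2\in\cS$ with $p$ a vertex of $Conv(S_1)$ and of $Conv(S_2)$; then the left side of $W_i$ is the segment $pb_i$, and $Conv(S_i)$ is contained in the closed cone $W_i$ with apex $p$. For $x\in Conv(S_i)\setminus\{p\}$ let $\mathrm{dir}(x)$ be the direction of $\vec{px}$. Since $Conv(S_i)$ is convex and $p$ is a vertex of it with boundary-neighbours $a_i,b_i$, the set $A_i:=\{\mathrm{dir}(x):x\in Conv(S_i)\setminus\{p\}\}$ is exactly the counterclockwise arc from $\mathrm{dir}(a_i)$ to $\mathrm{dir}(b_i)$, whose length is the interior angle of $Conv(S_i)$ at $p$, hence strictly less than $180\degree$ (if $Conv(S_i)$ is a segment, $a_i=b_i$ and $A_i$ is a single direction). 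Finally, ``$Conv(S_i)$ is to the left of $\vec{pc}$'' unwinds to: $A_i$ is contained in the half-open arc of length $180\degree$ starting strictly counterclockwise at $\mathrm{dir}(c)$.

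Next I would use the hypothesis $W_1\cap W_2=\{p\}$: a common direction of $A_1$ and $A_2$ would be a ray lying in both cones, so $A_1,A_2$ are disjoint arcs sharing no endpoint. Relabelling $W_1\leftrightarrow W_2$ if needed and rotating so that $\mathrm{dir}(b_1)=0\degree$, I get that $A_1$ is the arc $[360\degree-w_1,\,360\degree]$, where $w_1<180\degree$ is the interior angle of $Conv(S_1)$ at $p$, and, writing $\beta:=\mathrm{dir}(b_2)$ and $w_2<180\degree$ for the interior angle of $Conv(S_2)$ at $p$, that $A_2$ is the arc $[\beta-w_2,\,\beta]$ with $0\degree<\beta-w_2$ and $\beta<360\degree-w_1$ (the strict inequalities because $A_1,A_2$ share no endpoint).

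Now I would derive the contradiction. Assume $pb_1$ and $pb_2$ are both non-lefties from $p$. Since $pb_1$ is a non-leftie, $Conv(S_2)$ is not to the left of $\vec{pb_1}$; as $\mathrm{dir}(b_1)=0\degree$ this says $A_2\not\subseteq(0\degree,180\degree]$, and since $0\degree<\beta-w_2$ and $\beta<360\degree$, the only way this can fail is $\beta>180\degree$. Since $pb_2$ is a non-leftie, $Conv(S_1)$ is not to the left of $\vec{pb_2}$, i.e.\ $A_1\not\subseteq(\beta,\,\beta+180\degree]$. But $A_1=[360\degree-w_1,360\degree]$ starts at $360\degree-w_1>\beta$, and $\beta>180\degree$ forces $\beta+180\degree>360\degree$, so the half-open arc $(\beta,\,\beta+180\degree]$ sweeps past the direction $0\degree$ and therefore contains all of $A_1$ — a contradiction. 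Hence $pb_1$ and $pb_2$ cannot both be non-lefties.

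I expect the main obstacle to be boundary bookkeeping rather than any genuine difficulty: getting right which rays count as ``to the left'' of $\vec{pc}$ (the forward ray excluded, the opposite ray included, $p$ itself allowed), since this is exactly what makes the relevant arc half-open of length precisely $180\degree$; and checking that the degenerate case where some $Conv(S_i)$ is a segment ($a_i=b_i$, $A_i$ a single direction) passes through verbatim. I would also want to make explicit the one spot where the hypothesis ``$p$ is a vertex of $Conv(S_i)$'' (rather than an interior point) is used, namely that $Conv(S_i)$ fills its entire cone of directions at $p$, which is what makes ``$W_1\cap W_2=\{p\}$'' equivalent to ``$A_1\cap A_2=\emptyset$''. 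Everything else is elementary.
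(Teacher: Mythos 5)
Your proof is correct and follows essentially the same route as the paper's one-line argument: since the two direction arcs at $p$ are disjoint and each shorter than $180\degree$, one of the two witnessing hulls must lie entirely to the left of the other wedge's left side. You merely spell out in angular coordinates, as a contradiction, what the paper phrases as the dichotomy ``either $0\degree<|bpa\angle|\le 180\degree$ or $0\degree<|apb\angle|\le 180\degree$'', making explicit the (implicit) point that the whole hull, not just the endpoint of the left side, lands in the relevant half-plane.
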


\begin{proof}
	Call one of these sides $pa$, the other $pb$. If $0\degree< |bpa\angle|\leq 180\degree$ then  $pb$ is a leftie from $p$, but if $0\degree<|apb\angle|\leq 180\degree$ then $pa$ is a leftie from $p$.
\end{proof}

The restriction to segments of the concept of lefties and the previous two statements about it are the tools in the proof of Perles mentioned in the Introduction \cite{pach2011conway} that linear thrackles have at most $n$ segments, we show the proof for the sake of completeness:

\begin{corollary}\label{cor:linear}
	If every convex hull is a segment (i.e., we have a linear thrackle) then there are at most $n$ segments.
\end{corollary}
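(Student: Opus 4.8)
The plan is to carry out Perles' charging argument, using Lemma~\ref{lem:lefties} and Observation~\ref{obs:no2nonleftie} as black boxes. First I would dispose of the degenerate case: if some $S\in\cS$ has $|S|=1$, then property~$(1)$ of a convex hull thrackle forces $|\cS|=1\le n$. So from now on every convex hull of $C(\cS)$ is a genuine segment $Conv(\{p,q\})$ with $p\neq q$, which is in particular a segment of the boundary diagram $D$.

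Next comes the charging step. For each edge $e=Conv(\{p,q\})\in C(\cS)$, Lemma~\ref{lem:lefties} guarantees that $e$ is not a leftie from both $p$ and $q$, hence it is a non-leftie from at least one of its two endpoints; I pick such an endpoint and call it $v(e)$ (if both endpoints work, choose either). This defines a map $v\colon C(\cS)\to P$, and since the convex hulls in $C(\cS)$ are pairwise distinct we have $|\cS|=|C(\cS)|$, so it suffices to show that $v$ is injective: then $|\cS|=|C(\cS)|\le|P|=n$.

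For injectivity, suppose two distinct edges $e_1=Conv(\{p,a\})$ and $e_2=Conv(\{p,b\})$ satisfy $v(e_1)=v(e_2)=p$. Since $e_1\neq e_2$ we have $a\neq b$, and because $P$ is in general position the segments $pa$ and $pb$ meet only at $p$. For a linear thrackle the wedge at $p$ determined by $e_1$ is the degenerate case $a=b$ in the definition of a wedge applied to the one-edge convex hull $Conv(\{p,a\})$, so it is the segment $pa$ itself with left side $pa$; likewise the wedge of $e_2$ at $p$ is $pb$ with left side $pb$. These two wedges intersect only at $p$, so Observation~\ref{obs:no2nonleftie} forbids both $pa$ and $pb$ from being non-lefties from $p$, contradicting $v(e_1)=v(e_2)=p$. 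Hence $v$ is injective, which completes the proof.

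I do not expect a genuine obstacle here; the only point that needs care is the degenerate-wedge reading, namely verifying that for a one-edge convex hull the wedge at each of its endpoints $p$ is that edge itself and its left side is that edge, so that Observation~\ref{obs:no2nonleftie} applies verbatim with the two wedges $pa$ and $pb$. Everything else is bookkeeping, and this argument also supplies the fact about convex linear thrackles invoked at the end of the proof of Theorem~\ref{thm:convex}.
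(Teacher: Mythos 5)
Your proof is correct and follows exactly the paper's argument: charge each segment to an endpoint from which it is a non-leftie (possible by Lemma~\ref{lem:lefties}), and use Observation~\ref{obs:no2nonleftie} on the degenerate wedges to see that no vertex receives two charges. The extra care you take with the degenerate one-point case and the explicit injectivity bookkeeping is fine but not a different route.
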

\begin{proof}
	If every convex hull is a segment then by Observation \ref{obs:no2nonleftie} 
    no two segments incident to the same vertex $p$ can be non-lefties from $p$ (note that the left side of a segment is the segment itself). On the other hand by Lemma \ref{lem:lefties} every segment is a non-leftie from one of its endpoints. These together imply that we have at most as many segments as points of $P$.
\end{proof}

Back to the proof of Theorem \ref{thm:2n}, in the general case we also try to do something similar to what is done in the proof of Corollary \ref{cor:linear}. More precisely, it is enough to prove that for each vertex $p$ the sum of the weight of segments of $D$ incident to $p$ that are non-lefties from $p$ is at most $6$. Indeed, by Lemma \ref{lem:lefties} each segment of the boundary diagram has to be non-leftie from at least one of its endvertices so if each vertex has at most a weight of $6$ non-lefties from it, then the whole boundary diagram can have at most $6n$ weight of segments, hence the convex hull thrackle can contain at most $2n$ convex hulls. Thus, the following lemma concludes the proof of Theorem \ref{thm:2n}:

 \begin{lemma}\label{lem:6}
 For every vertex $p$ the sum of the weight of segments incident to $p$ that are non-lefties from $p$ is at most $6$.
 \end{lemma}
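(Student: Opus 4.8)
The plan is to fix a vertex $p$ and study the convex hulls incident to $p$ (those with $p$ a vertex of $Conv(S)$) through their cyclic arrangement around $p$. Identify each ray out of $p$ with a point of a circle $S^{1}$; by general position no two of the rays $\vec{pq}$, $q\in P\setminus\{p\}$, are antipodal. A convex hull $C=Conv(S)$ incident to $p$ occupies a closed cone at $p$, which I record as an arc $A_{C}\subseteq S^{1}$ of angular width strictly less than $180\degree$; if $C$ is a segment $Conv(\{p,a\})$ then $A_{C}$ is a single point, and then $C$ contributes to $D$ a single segment of weight exactly $3$, while a triangle or larger contributes its two bounding rays, each a segment of weight at most $2$. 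So the goal is to bound by $6$ the total weight in $D$ of the directions $d$ for which $pd$ is a non-leftie from $p$.

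The first ingredient is a consequence of the third defining property: no ray out of $p$ lies in three of the arcs $A_{C}$. Indeed, if a ray $r$ lay in $A_{C_{1}}\cap A_{C_{2}}\cap A_{C_{3}}$, then each $C_{i}=Conv(S_{i})$ contains a nondegenerate initial piece of $r$ starting at $p$ (an edge of $C_{i}$ if $r$ is a bounding ray of its cone, a chord through $p$ otherwise), so $C_{1}\cap C_{2}\cap C_{3}$ would contain a segment of positive length, contradicting that it lies in $P$. Hence the family $\{A_{C}\}$ covers $S^{1}$ with multiplicity at most $2$.

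The second ingredient pins down the lefties: a hull $C$ incident to $p$ witnesses that $pd$ is a leftie from $p$ exactly when $A_{C}$ is contained in the half-circle strictly counterclockwise of $d$. Writing $A_{C}$ as the arc running counterclockwise from its bounding ray $\alpha_{C}$ to its bounding ray $\beta_{C}$ (so $p\beta_{C}$ is the left side of the wedge of $C$), this happens precisely for $d$ in an arc $f(C)$ of angular width $180\degree$ minus the width of $A_{C}$, having $\alpha_{C}$ as one endpoint and running counterclockwise from the antipode of $\beta_{C}$ to $\alpha_{C}$ (for a segment $C$ the arc $f(C)$ has width $180\degree$). Thus every non-leftie direction at $p$ avoids $\bigcup_{C}f(C)$; the extra lefties witnessed by hulls not incident to $p$ only shrink the set of non-lefties, so they can be ignored for an upper bound. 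I also note here that at most one segment-type hull at $p$ can have a non-leftie direction: if both $Conv(\{p,a\})$ and $Conv(\{p,b\})$ were present, then since $a,b$ are not antipodal with respect to $p$, one of them lies to the left of the ray to the other, so one of the two segments witnesses that the other is a leftie. Hence segment-type hulls contribute at most $3$ to the total.

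It remains to combine these. Walking once around $S^{1}$, I would use the multiplicity-at-most-$2$ property, the shape of the arcs $f(C)$, and Observation~\ref{obs:no2nonleftie} (two wedges at $p$ meeting only in $p$ cannot both have a non-leftie left side) to bound by $6$ the weighted number of non-leftie directions at $p$; most naturally via a short case analysis on how many triangles at $p$ have \emph{both} bounding rays non-leftie, using that a triangle contributes at most $2$ and that the third defining property strongly restricts how such "doubly non-leftie" triangles can coexist (a fourth one would force a triple overlap of cones). The extremal configuration is three triangles at $p$ whose cones pairwise overlap with multiplicity $2$ and together cover $S^{1}$: then all six bounding rays are non-lefties of weight $1$ and the total weight is exactly $6$, so the bound cannot be improved. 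I expect this final counting step — converting the local conditions (multiplicity $\le 2$; no two arc-disjoint wedges with non-leftie left sides) into the sharp global bound $6$, while correctly bookkeeping weight-$2$ shared edges and the at-most-one weight-$3$ segment — to be the main obstacle.
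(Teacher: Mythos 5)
Your setup is sound and closely parallels the paper's: you encode each hull incident to $p$ as an arc of directions of width less than $180\degree$, you correctly derive from the third defining property that no direction at $p$ is covered by three such arcs, you characterize lefties via containment in a half-circle, and you observe (correctly) that at most one weight-$3$ segment-hull at $p$ can be a non-leftie. All of this matches the paper's toolkit (its wedges, Observation~\ref{obs:no2nonleftie}, and the no-triple-overlap fact).

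However, the proof stops exactly where the real work begins. The final paragraph — ``Walking once around $S^{1}$, I would use \dots most naturally via a short case analysis \dots I expect this final counting step \dots to be the main obstacle'' — is a plan, not an argument, and you say so yourself. In the paper this step is not short: it is a four-case analysis (whether a weight-$3$ non-leftie segment exists; whether some wedge has both sides non-leftie, split further by the presence of a ``Type 4'' wedge; whether some wedge has a leftie left side and non-leftie right side; and the remaining case), and within each case one must repeatedly extract a pair of wedges that are disjoint apart from $p$ and invoke Observation~\ref{obs:no2nonleftie} or a direct witnessing argument to shave the count from $7$ or $8$ down to $6$. The local facts you establish (multiplicity at most $2$; no two arc-disjoint wedges both with non-leftie left sides) do not by themselves yield $6$ by a generic sweep: for instance, one must separately handle the configuration of a weight-$3$ segment plus three overlapping wedges, and in the two-sided-non-leftie case one must argue why five wedge-sides of total weight $7$ cannot all be non-lefties, which takes a genuinely nontrivial chain of disjointness deductions. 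Since none of this is carried out, the proposal has a substantive gap at the heart of the lemma; the preliminary reductions alone do not prove the bound.
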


 \begin{proof}
 The proof of the lemma is split into a few cases.

 \begin{itemize}
     \item{Case 1:} There exists a segment $pa$ in $D$ which is also a convex hull in $\cS$ (thus has weight $3$) and $pa$ is a non-leftie from $p$.
     
	In this case for any $q$ where $pq$ is a non-leftie from $p$, we know that $0\degree \le |apq\angle|< 180\degree$, as otherwise the convex hull $pa$ would witness that $pq$ is a leftie. In addition, this angle cannot have size  $0\degree$, since then either $3$ vertices would be collinear, or the convex hull $pa$ would be contained in another convex hull from $C(\cS)$, both of which are forbidden. That is, $q$ is left of $pa$.
     
	Thus, we cannot have another weight $3$ non-leftie segment from $p$ or two non-leftie segments from $p$ that are on the boundary of the same convex hull, because that hull would then witness $pa$ being a leftie from $p$. So for each non-leftie segment incident to $p$ besides $pa$ there must be another leftie segment which is part of the boundary of the same convex hull.
     
    From now on, we think of these convex hulls as if they were the wedges they define.
     
    To rephrase what we have shown already, each non-leftie segment at $p$ is to the left to $pa$ while the other segment incident to $v$ on the boundary of the same convex hull is not to the left to $pa$, thus the corresponding wedge contains a non-zero part of the line defined by $pa$. We also know that no three wedges can overlap by the definition of a convex hull thrackle. Hence at most one of these wedges can contain the halfline $pa$, and at most two the complement halfline on the line defined by $pa$. This gives at most $3$ wedges that can contribute weight $1$ to the non-leftie segments incident to $p$. Together with $pa$, this is at most weight $6$ of segments, as required, finishing this case. See Figure \ref{fig:case1}
     
     \begin{figure}
     \begin{center}
     \includegraphics[scale=1]{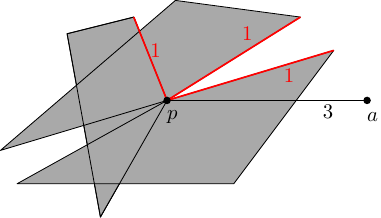}
     \caption{The maximum sum of weights of non-leftie segments at $p$ is $6$ in Case 1.}
     \label{fig:case1}
     \end{center}
     \end{figure}

     From now on we can and will assume that there is no segment incident to $p$ in $D$ which is also a convex hull in $\cS$ and is a non-leftie from $p$. 
     
     \item{Case 2:} There is a wedge $apb\angle$ whose both sides are non-lefties from $p$. 
     
	Denote by $a'$ and $b'$ the reflection of $a$ and $b$ to $p$, respectively. Note that it is impossible for a convex angle at $p$ (thus also a wedge) to contain more than $2$ of $\{pb,pa,pb',pa\}$. Moreover, using that $pa$ and $pb$ are non-lefties, it is easy to see that every wedge $w$ that has a non-leftie side fits into one of the following Types (listing also its contribution to the sum of weights of non-leftie segments of $D$ at $p$):

     \begin{enumerate}
     \item $w$ contains $pa$ but not $pb'$. Only the left side of $w$ can be a non-leftie. Contributes weight $\le 2$.
     
     \item $w$ contains $pb'$ but not $pa$. Contributes weight $\le 1$.
     
     \item $w$ contains both $pb$ and $pa'$. Contributes weight $\le 2$.
     
     \item $w$ contains both $pa$ and $pb'$. Contributes weight $\le 2$.
    \end{enumerate}
     
     \begin{figure}
	\begin{center}
		\includegraphics[scale=1]{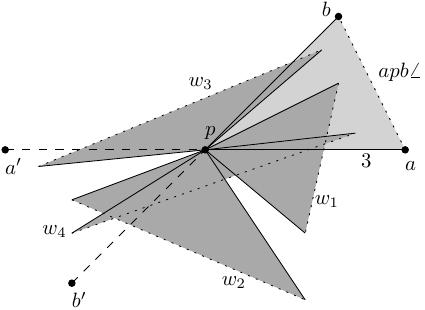}
		\caption{The possible types in Case 2: wedge $w_i$ is of Type $i$ for $i\in [4]$.}
		\label{fig:case2}
	\end{center}
	\end{figure}

     We cannot have more than one Type $4$ wedge (as these and $\angle apb$ all contain $pa$).
     
     \begin{itemize}

     	\item{Case 2(a):} There is no Type $4$ wedge at $p$.
     
	In this case, we can have at most one Type $1$ wedge, at most two Type $2$, and at most one Type $3$ wedge. Their maximal weight contribution together is thus $6$. If the Type $1$ or Type $3$ wedge does not exist then together with the contribution of $\angle apb$ we have a maximum sum of weight of $6$.

    Otherwise, notice that the Type $1$ and Type $3$ wedges must be disjoint apart from point $p$. Then by Observation \ref{obs:no2nonleftie} their left sides cannot be both non-lefties and so at least an additional weight of $1$ is lost.

    Thus, to have a total weight of $7$, all the Type $2$ wedges must exist too.
    Let the left side of the Type $1$ wedge be $pc$, and the left side of the Type $3$ wedge be $pd$. Exactly one of them is a leftie. 
    
    The Type $3$ wedge must be disjoint apart from $p$ from one of the Type $2$ wedges, as they cannot both contain $pd$. Since they are Type $2$, they do not contain $pa$ either, so the disjoint wedge must witness $pd$ is a leftie.

    So $pc$ must be a non-leftie. So the Type $3$ wedge must contain $pc'$. Then there is a Type $2$ wedge that does not contain $pc'$. But then the Type $1$ wedge witnesses that the right side of this Type $2$ wedge is a leftie. So another weight of non-leftie segments must be lost, thus the weight contribution of all non-leftie segments is at most $4$ excluding $\angle apb$, which together with the contribution of $\angle apb$ gives a maximum sum of weight of $6$.
	
\item{Case 2(b):} There is one Type $4$ wedge at $p$.

	In this case, we have one Type $4$, at most one Type $2$, at most one Type $3$, and no Type $1$ wedges. Their maximal weight contribution together is thus $5$. If any of them does not exist then together with the contribution of $\angle apb$ we have a maximum sum of weight of $6$.

    Otherwise, if the Type $4$ and Type $3$ wedges are disjoint apart from $p$, then again by Observation \ref{obs:no2nonleftie}, their left sides cannot be both non-lefties and so at least an additional weight of $1$ is lost.

    Otherwise, the Type $3$ and Type $4$ wedges must have at least a common segment. Then the Type $2$ wedge and the Type $3$ wedge must be disjoint apart from $p$ as there are no triple intersections. However, the Type $2$ wedge does not contain $pa$ and thus it must witness that the left side of the Type $3$ wedge is a leftie and so at least an additional weight of $1$ is again lost.

    Thus the weight contribution of all non-leftie segments is always at most $4$ excluding $\angle apb$, which together with the contribution of $\angle apb$ gives a maximum sum of weight of $6$.

\begin{figure}
	\begin{center}
		\includegraphics[scale=1]{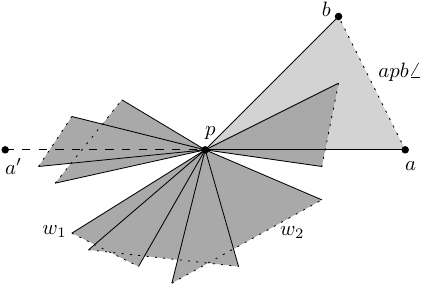}
		\caption{Case 3: $w_2$ witnesses that both sides of $w_1$ are lefties.}
		\label{fig:case3}
	\end{center}
\end{figure}

     \end{itemize}    	
     \item{Case 3:} Every wedge has a leftie side. In addition to that, there is a wedge whose left side is a leftie and whose right side is a non-leftie.

     Let $apb\angle$ be a wedge such that $pb$ is a leftie from $p$, and $pa$ is a non-leftie from $p$. Denote by $a'$ the reflection of $a$ to $p$.     

	 Since $pa$ is a non-leftie, every other wedge at $p$ must either be contained entirely in $a'pa\angle$, or must contain $pa'$ or $pa$.

     At most two such wedges can contain $pa'$, and at most one wedge can contain $pa$. If there are at most two wedges entirely in $a'pa\angle$ with a non-leftie side then the sum of the weight of all wedges at $p$ is at most $6$, as required. Otherwise, there are at least $3$ wedges entirely in $a'pa\angle$ having a non-leftie side. Notice that among these there must be two wedges that are disjoint apart from $p$ (as there are no triple intersections apart from $p$). Since these two wedges are contained in an angle $\leq 180\degree$, the wedge on the left witnesses that both sides of the one on the right are lefties, a contradiction. See Figure \ref{fig:case3}.        

     \item{Case 4:}
     All wedges with a non-leftie side have their non-leftie side on the left side.
     
     In this case by Observation \ref{obs:no2nonleftie} we cannot have $2$ disjoint wedges with a non-leftie side at $p$. As among any $4$ wedges at $p$, two must be disjoint (as there are no triple intersections apart from $p$), we can have at most $3$ wedges at $p$, thus their weight is at most $3<6$, as required.
 \end{itemize}
We have exhausted all cases and proved that in each case the sum of the weight of the wedges at $p$ is at most $6$, as required.
 \end{proof}
 This finishes the proof of Theorem \ref{thm:2n}.
\end{proof}

\section{Variations on the definition}\label{sec:disc}

We briefly investigate the necessity of the assumptions in the definition of convex hull thrackles. Allowing $C_1\cap C_2= \emptyset$ makes a straight line drawing of the complete graph allowed, having $n\choose 2$ segments. Not requiring $C_1\cap C_2\cap C_3\subset P$, the following construction shows that we can have $(n/3)^3-o(1)$ convex hulls. Take $n$ points in convex position, split them into $3$ equal intervals and for each triple of points, one from each interval, we take its convex hull.

\begin{figure}
	\begin{center}
		\includegraphics[scale=1]{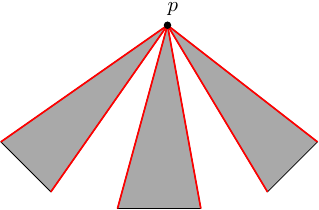}
		\caption{The construction with $n-1$ segments and $\lfloor (n-1)/2\rfloor$ triangles when containments are allowed.}
		\label{fig:constr-cont}
	\end{center}
\end{figure}

Regarding the assumption that there is no containment between the convex hulls, allowing $C_1\subset C_2$ we can get $\lfloor \frac{3}{2}(n-1)\rfloor$ convex hulls, as observed by Gossett \cite{pc}, who made this remark in order to disprove the version of Conjecture \ref{conj} that originally appeared in \cite{agoston2022orientation}, in which containments were allowed. Gossett's construction is the following. Take $n$ points in convex position, connect one of them, $p$, to all others by segments. Add also every second triangle incident to $p$ determined by segments consecutive in their linear order around $p$. See Figure \ref{fig:constr-cont}. On the other hand, our proof works in this case too except for Case $1$, in which case instead of sum of weight at most $6$ we can only guarantee a sum of weight at most $7$. Thus, when we allow containments, this gives the upper bound $7n/3$ on the number of convex hulls. 
Furthermore, if alongside containments we also allow that the family of convex hulls is a multiset (i.e., the same convex hull can appear multiple times in our family of convex hulls), then doubling the segments of a star we can have at least $2n-2$ convex hulls, again even if the points are in convex position. On the other hand in Case $1$ we can guarantee a sum of weight at most $8$, which gives the upper bound $8n/3$.

Finally, what if we do not assume that the points are in general position, i.e.,  there can be $3$ or more points on a line? We still do not have a better lower bound than $n+1$. On the other hand, we can again modify the proof of Lemma \ref{lem:6} to work for points in non-general position. First, when defining the boundary diagram we need to use maximum length sides, i.e., even if a point lies on a side of some convex hull, in the diagram there is only a single segment associated with this side. With this modification the same proof works, except that in Case $1$ we yet again lose a bit, we can only guarantee a sum of weight at most $7$, which gives the upper bound $7n/3$. If we allow both points in non-general position, containments and multisets then we can still guarantee a sum of weight at most $8$, which gives the upper bound $8n/3$.

The details of these various modifications to Case $1$ of the proof of Lemma \ref{lem:6} are left to the interested reader.

\bibliographystyle{plainurl}
\bibliography{thrackles}
	
\end{document}